\tikzset{labl/.style={anchor=south, rotate=270, inner sep=.5mm}}
\numberwithin{equation}{section}
\theoremstyle{plain}
\newtheorem*{itheorem}{Theorem}
\newtheorem{theorem}[equation]{Theorem}
\newtheorem{proposition}[equation]{Proposition}
\newtheorem{lemma}[equation]{Lemma} 
\newtheorem{corollary}[equation]{Corollary}
\theoremstyle{definition}
\newtheorem{chunk}[equation]{}
\theoremstyle{remark}
\newtheorem{remark}[equation]{Remark} 
\newtheorem*{ack}{Acknowledgements}
\newcommand{\bSpec}{\operatorname{Spc}}
\newcommand{\cat}{\mathcal}
\newcommand{\comp}[1]{{#1}^{\operatorname{c}}}
\newcommand{\dbcat}[1]{{\mathbf D}^{\mathrm{b}}(\operatorname{mod}#1)}
\newcommand{\dcat}[1]{{\mathbf D}(#1)}
\newcommand{\duals}[1]{{#1}^{\operatorname{d}}}
\newcommand{\Ext}{\operatorname{Ext}}
\newcommand{\gam}{\varGamma} 
\newcommand{\Hom}{\operatorname{Hom}}
\newcommand{\fHom}{\operatorname{\mathcal{H}\!\!\;\mathit{om}}}
\newcommand{\lam}{\varLambda}
\newcommand{\llam}{{\mathbf L}\varLambda}
\newcommand{\lotimes}{\otimes^{\mathbf L}}
\newcommand{\Mod}{\operatorname{Mod}}
\newcommand{\one}{\mathds 1}
\newcommand{\op}{\mathrm{op}}
\newcommand{\Proj}{\operatorname{Proj}}
\newcommand{\rank}{\operatorname{rank}}
\newcommand{\rgam}{{\mathbf R}\Gamma}
\newcommand{\RHom}{\operatorname{{\mathbf R}Hom}}
\newcommand{\Si}{\Sigma}
\newcommand{\Spec}{\operatorname{Spec}}
\newcommand{\StMod}{\operatorname{StMod}}
\newcommand{\stmod}{\operatorname{stmod}}
\newcommand{\supp}{\operatorname{supp}}
\newcommand{\swd}{D^{\scriptscriptstyle{\mathrm {SW}}}}
\newcommand{\Thick}{\operatorname{Thick}}
\newcommand{\wh}{\widehat}
\newcommand{\iso}{\xrightarrow{\raisebox{-.4ex}[0ex][0ex]{$\scriptstyle{\sim}$}}}
\newcommand{\longiso}{\xrightarrow{\ \raisebox{-.4ex}[0ex][0ex]{$\scriptstyle{\sim}$}\ }}
\newcommand{\lra}{\longrightarrow}
\newcommand{\xra}{\xrightarrow}
\newcommand{\bfq}{\mathbf q} 
\newcommand{\bfD}{\mathbf D} 
\newcommand{\bfK}{\mathbf K}
\newcommand{\bbZ}{\mathbb Z} 
\newcommand{\bbX}{\mathbb X}
\newcommand{\bfp}{\mathbf{p}}
\newcommand{\fm}{\mathfrak{m}} 
\newcommand{\fp}{\mathfrak{p}}
\newcommand{\fq}{\mathfrak{q}} 
\newcommand{\eps}{\varepsilon}
\title[Local dualisable objects]{Local dualisable objects \\ in local algebra}
\author[Benson, Iyengar, Krause, and Pevtsova]{Dave Benson, Srikanth
  B. Iyengar, Henning Krause \\ and Julia Pevtsova}
\address{Dave Benson \\ 
Institute of Mathematics\\ 
University of Aberdeen\\ 
King's College\\ 
Aberdeen AB24 3UE\\ 
Scotland U.K.}
\address{Srikanth B. Iyengar\\ 
Department of Mathematics\\
University of Utah\\ 
Salt Lake City, UT 84112\\ 
U.S.A.}
\address{Henning Krause\\ 
Fakult\"at f\"ur Mathematik\\ 
Universit\"at Bielefeld\\ 
33501 Bielefeld\\ 
Germany.}
\address{Julia Pevtsova\\ 
Department of Mathematics\\ 
University of Washington\\ 
Seattle, WA 98195\\ 
U.S.A.}
\begin{document}

\begin{abstract}
We discuss dualisable objects in minimal subcategories of compactly generated tensor triangulated categories, paying special attention to the derived category of a commutative noetherian ring. A cohomological criterion for detecting these local dualisable objects is established. Generalisations to other related contexts are discussed. 
\end{abstract}

\keywords{Balmer spectrum, compact object, derived category, dualisable object, reflexive object, tensor triangulated category}

\subjclass[2020]{13D09 (primary); 18G80, 14F08 (secondary)}

\date{\today}

\maketitle

\setcounter{tocdepth}{1}

\section{Introduction}
Let $\cat T$ be a rigidly compactly generated tensor triangulated category; in this work we consider only the symmetric tensor categories. A central problem  is to classify the localising tensor ideals in $\cat T$. Consider the lattice, with respect to inclusion, of such subcategories. In many contexts, its structure is determined by the minimal elements in the lattice.  Often these minimal elements are parameterised by some topological space; for instance, the Balmer spectrum, or the spectrum of some commutative ring acting on $\cat T$, in the sense of \cite{Benson/Iyengar/Krause:2011a}.  

We are interested in the structure of a minimal subcategory, say
$\cat S$. Minimality implies that there are no proper localising
tensor ideals in $\cat S$. In particular, there are no proper thick
tensor ideals in the subcategory of compact objects in $\cat S$.
Typically however, there are dualisable objects which are not
compact. Thus there is a collection of thick tensor ideals in the subcategory of
dualisable objects in $\cat S$, and one can get a handle on them by
computing its spectrum.

This is what is done in this work for $\dcat A$, the derived category of a commutative noetherian ring $A$. In that case the minimal localising subcategories are the subcategories, $\Gamma_\fp\dcat A$, of the derived category consisting of the $\fp$-local and $\fp$-torsion $A$-complexes, where $\fp$ is a prime ideal in $A$. Our main result, which reappears as Theorem~\ref{th:ca-ring}, characterises dualisable objects in these categories.

\begin{itheorem}
\label{itheorem}
For each $X$ in $\Gamma_\fp\dcat A$ the following conditions are equivalent.
\begin{enumerate}[\quad\rm(1)]
\item
$X$ is dualisable in $\Gamma_\fp\dcat A$;
\item
$\rank_{k(\fp)}H(k(\fp)\lotimes_{A}X)$ is finite;
\item
$X$ is in $\Thick(\rgam_{\fp}(A_\fp))$.
\end{enumerate}
\end{itheorem}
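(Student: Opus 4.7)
My plan is to prove the cycle $(3)\Rightarrow(1)\Rightarrow(2)\Rightarrow(3)$. The first implication is formal: $\rgam_\fp(A_\fp)$ is the tensor unit of $\Gamma_\fp\dcat A$, hence dualisable, and dualisable objects are closed under suspensions, cofibres, and retracts; so $\Thick(\rgam_\fp(A_\fp))$ consists of dualisable objects. For $(1)\Rightarrow(2)$, I would use the derived base change $F := k(\fp)\lotimes_A -\colon \dcat A\to\dcat{k(\fp)}$, which is exact and symmetric monoidal. Since $\rgam_\fp$ is smashing, $F$ sends the unit $\rgam_\fp(A_\fp)$ of $\Gamma_\fp\dcat A$ to $\rgam_\fp(k(\fp))\simeq k(\fp)$, the unit of $\dcat{k(\fp)}$. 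Hence $F|_{\Gamma_\fp\dcat A}$ is unit-preserving and symmetric monoidal, so preserves dualisability. A dualisable $k(\fp)$-complex is perfect, so its total cohomology has finite $k(\fp)$-rank.

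The substantive step is $(2)\Rightarrow(3)$. The canonical equivalence $\Gamma_\fp\dcat A\simeq \Gamma_{\fp A_\fp}\dcat{A_\fp}$ reduces to the case that $(A,\fp)$ is local noetherian; set $k = k(\fp)$. Fix a generating sequence $\bsr = r_1,\ldots,r_d$ of $\fp$ and form the Koszul complex $K = \kos{A}{\bsr}$. Since $K$ is perfect over $A$ and lies in $\Gamma_\fp\dcat A$, tensoring with $\rgam_\fp(A)$ shows $K\simeq \rgam_\fp(A)\lotimes_A K\in\Thick(\rgam_\fp(A))$. It therefore suffices to show $X\in\Thick(K)$, which I would prove by induction on $n = \rank_k H(k\lotimes_A X)$. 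When $n=0$, the subcategory $\{Y\in\Gamma_\fp\dcat A: X\lotimes_A Y = 0\}$ is a localising tensor ideal containing $k$, so by the minimality of $\Gamma_\fp\dcat A$ it is the whole category, giving $X\simeq X\lotimes_A\rgam_\fp(A) = 0$. For $n\geq 1$, Koszul self-duality $\RHom_A(K,A)\simeq\Sigma^{-d}K$ yields
\[
\Hom_{\dcat A}(K, \Sigma^{i} X) \;\cong\; H^{i-d}(K\lotimes_A X),
\]
and a Nakayama-style lifting of a chosen nonzero extremal class in $H(k\lotimes_A X)$ produces a morphism $K\to\Sigma^{i}X$ whose cofibre $X'$ has $\rank_k H(k\lotimes_A X') < n$, closing the induction.

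The main obstacle is this lifting step. The essential input is the identification $k\lotimes_A(K\lotimes_A X)\simeq (k\lotimes_A K)\otimes_k(k\lotimes_A X)$, with $k\lotimes_A K$ the Koszul complex on the zero sequence over $k$, whose cohomology is the exterior algebra $\bigwedge^{\bullet}k^{d}$ in a known range of degrees. A Nakayama-type argument on an extremal graded piece then selects a morphism $K\to\Sigma^{i}X$ whose image under $k\lotimes_A -$ realises a prescribed nonzero class, and inspection of the cofibre triangle yields the required drop in rank; an alternative route, probably cleaner, is to apply Matlis duality to replace the problem with the classical criterion $M\in\Thick_{\hat A}(\hat A)\iff \dim_k \RHom_{\hat A}(k,M)<\infty$ for suitably bounded complete complexes $M$.
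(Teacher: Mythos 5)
Your implication $(3)\Rightarrow(1)$ matches the paper's argument. Your $(1)\Rightarrow(2)$ is a genuinely different and cleaner route than the paper's: the paper fixes a Koszul complex $K$ on $\fm$, applies Proposition~\ref{pr:dualisable}(4) to conclude $K\otimes_A X$ is compact, and then reads off finiteness of $\rank_k H(k\lotimes_A X)$ from the K\"unneth factorisation $H(k\lotimes_A(K\otimes_A X))\cong H(k\otimes_A K)\otimes_k H(k\lotimes_A X)$. Your observation that $k(\fp)\lotimes_A-$ restricts to a strong symmetric monoidal functor $\Gamma_\fp\dcat A\to\dcat{k(\fp)}$ (sending the unit $\rgam_\fp(A_\fp)$ to $k(\fp)$) and hence preserves dualisable objects is correct and conceptually more economical; dualisable objects over a field are precisely the complexes with finite total rank, so (2) follows. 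This is a legitimate alternative.

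Your $(2)\Rightarrow(3)$, however, has a genuine gap, and it is the substantive implication. You acknowledge the obstacle yourself, but let me be concrete about why the Koszul-based induction cannot close. If $K=\kos{A}{\bsr}$ with $\bsr$ a generating sequence of length $d$, then $\rank_k H(k\otimes_A K)=2^d$. For any morphism $K\to\Si^i X$ with cofibre $X'$, the long exact sequence for $k\lotimes_A(-)$ gives
\[
\rank_k H(k\lotimes_A X') \;=\; n + 2^d - 2\,\rank(\operatorname{im}),
\]
where $\operatorname{im}$ is the image of $H(k\otimes_A K)\to H(\Si^i(k\lotimes_A X))$. To have $\rank_k H(k\lotimes_A X')<n$ you would need $\rank(\operatorname{im})>2^{d-1}$, but $\rank(\operatorname{im})\le\min(2^d,n)$, which already fails whenever $n\le 2^{d-1}$ (in particular for $n=1$ and any $d\ge 1$). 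So the inductive step as formulated does not drop the rank; in general it increases it. The fix, which is what the paper does, is to replace $K$ by an object whose $k\lotimes_A(-)$ has rank~$1$. The paper passes across Greenlees--May duality to the $\fm$-complete side and builds from $\wh A$ rather than $K$: since $k\lotimes_A\wh A\cong k$, the cofibre of a nonzero map $\Si^s\wh A\to\llam^\fm X$ drops the rank by exactly one (Proposition~\ref{pr:ca-ring}(2)), and then Lemma~\ref{le:GM-equivalence} transports $\Thick_A(\wh A)$ back to $\Thick_A(\rgam_\fm A)$. Your ``alternative route'' via Matlis duality is pointing in this same direction, but the stated criterion $M\in\Thick_{\wh A}(\wh A)\iff\dim_k\RHom_{\wh A}(k,M)<\infty$ is not true as written; one must first prove that completeness plus finiteness of $H(k\lotimes_A M)$ forces $H(M)$ to be bounded (this is Proposition~\ref{pr:ca-ring}(1), an argument your sketch omits), and then produce the map from $\wh A$ using completeness. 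So the alternative route is close to the paper's actual proof but is not carried out, while the route you develop in detail does not work.
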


In the statement $k(\fp)$ is the residue field of the local ring $A_\fp$ and $\rgam_{\fp}(-)$ is the local cohomology functor with respect to $\fp$; see Section~\ref{se:ca-rings}. In contrast, an object $X$ in $\Gamma_\fp\dcat A$ is compact precisely when $\mathrm{length}_{A_\fp} H(X)$ is finite. 

It follows from the characterisation above that the dualisable
complexes in $\Gamma_\fp\dcat A$ form a thick subcategory; they are
always closed under tensor products. Given this, and the equivalence
of complete modules and torsion modules, established by Dwyer and
Greenlees~\cite{Dwyer/Greenlees:2002a}, we deduce that the spectrum of
the dualisable objects identifies with the Zariski spectrum of the
completion of the local ring $A_\fp$ at its maximal ideal; see
Corollary~\ref{co:balmer}. This suggests viewing the passage from
compact objects to dualisable objects in any compactly generated
tensor triangulated category as a completion process. Similar
considerations imply that when the ring $A_\fp$ is regular, the
category of dualisable objects in $\Gamma_\fp\dcat A$ has a strong
generator; see Corollary~\ref{co:regularity} and the discussion
surrounding it.

Here is an outline of the contents of this manuscript: Section~\ref{se:dualisability} collects some well-known, though not well-recorded, results and remarks on notions of smallness and dualisablity in general compactly generated tensor triangulated categories. Sections~\ref{se:ca-rings} and \ref{se:local-dualisable} are about the derived category of a commutative noetherian ring,  culminating in the characterisation of local dualisable objects and some corollaries. Section~\ref{se:others} contains a discussion of dualisable objects in other contexts, including the stable homotopy category. In fact, the formulation of the theorem above and our work reported here and in \cite{Benson/Iyengar/Krause/Pevtsova:2023b} is inspired by work of Hovey and Strickland~\cite{Hovey/Strickland:1999a} on dualisable objects in the $K(n)$-local stable homotopy category.

\begin{ack}
Part of this work was done during the Trimester Program ``Spectral
Methods in Algebra, Geometry, and Topology" at the Hausdorff Institute
in Bonn. It is a pleasure to thank HIM for hospitality and for funding
by the Deutsche Forschungsgemeinschaft under Excellence
Strategy EXC-2047/1-390685813. Our work also benefitted from participation in the Abel Symposium on ``Triangulated categories in representation theory and beyond", held in {\AA}lesund, in June 2022. We thank the Niels Henrik Abel Memorial Fund for invitation to this event. During the preparation of this work, SBI was partly supported by NSF grants DMS-1700985 and DMS-2001368. JP was partly supported by NSF grants DMS-1901854 and DMS-2200832, and a Brian and Tiffinie Pang faculty fellowship.
\end{ack}

\section{Dualisability}
\label{se:dualisability}
Though the focus of our work is on the derived category of a commutative ring, we begin by recalling various notions of dualisability in general tensor triangulated categories. Our basic references for this material are \cite[Appendix~A.2]{Hovey/Palmieri/Strickland:1997a} and \cite[Chapter~III]{Lewis/May/Steinberger:1986a}.  While much of the discussion is valid for symmetric monoidal categories, our examples are equipped with a compatible structure of a triangulated category so we work in that context.

Let us fix a compactly generated tensor triangulated category $({\cat T},\otimes,\one)$, with symmetric tensor product $\otimes$ and unit $\one$; the latter need not be compact. As usual, $\cat T^{\mathrm c}$ denotes the full subcategory of compact objects in $\cat T$.

Brown representability yields \emph{function objects} $\fHom(X,Y)$ satisfying an adjunction isomorphism
\[
\Hom_{\cat T}(X\otimes Y,Z)\cong \Hom_{\cat T}(X,\fHom(Y,Z)) \quad \text{for all $X,Y,Z$ in ${\cat T}$.}
\]
The construction implies that the functor $\fHom(Y,-)$ on ${\cat T}$ is exact; we will assume that the functor $\fHom(-,Z)$ is also exact. The adjunction isomorphism above yields natural isomorphisms
\[
\fHom(X\otimes Y,Z) \cong \fHom(X,\fHom(Y,Z))\,.
\]
The counit of the adjunction above plays a role in the sequel:
\[
 \eps \colon \fHom(X,Y)\otimes X \lra Y\,.
\]
We will need the symmetric braiding in ${\cat T}$ that we denote:
\[
\gamma \colon X\otimes Y \longiso Y\otimes X\,.
\]
One has also a natural map
\begin{equation}
\label{eq:nu}
\nu \colon \fHom(X,Y)\otimes Z\lra \fHom(X,Y\otimes Z)\,,
\end{equation}
obtained as the adjoint to the composition of maps
\[
\fHom(X,Y)\otimes Z\otimes X \xra{\ 1\otimes \gamma\ } \fHom(X,Y)\otimes X\otimes Z \xra{\ \eps \otimes 1\ } Y \otimes Z\,.
\]
The \emph{Spanier--Whitehead dual} of an object $X$ is
\[
\swd X \colonequals \fHom(X,\one)\,.
\]
The assignment $X\mapsto \swd X$ is a contravariant functor ${\cat T}\to{\cat T}$.    An object $X$ in ${\cat T}$ is said to be \emph{dualisable} if for all
$Y$ in ${\cat T}$ the natural map
\[
\swd X\otimes Y\lra \fHom(X,Y)\,,
\]
obtained from \eqref{eq:nu} by setting $Z=\one$, is an isomorphism. We denote by $\cat T^{\mathrm d}$ the full subcategory of dualisable
objects in $\cat T$.

The adjoint of the composite $X\otimes \swd X \xra{\gamma} \swd X\otimes X \xra{\eps}\one$ of the braiding with the counit $\eps$  gives the natural double duality map
\[
\rho \colon X \lra \swd\swd(X)\,.
\]
We say $X$ is \emph{reflexive} if this map is an isomorphism. Dualisable objects are reflexive---this is part of the result below---but not conversely; see~\ref{ch:KProj}.

An object $X$ in $\cat T$ is said to be \emph{functionally compact}
if for all set-indexed collections of objects $\{Y_i\}$ the following
natural map is an isomorphism:
\[
\bigoplus_i \fHom(X,Y_i) \lra \fHom(X, \bigoplus_i Y_i).
\]
Observe that replacing the function object with $\Hom$ defines compactness. 

The result below collects some useful  observations concerning these notions; we give the proofs because some of the arguments are rather delicate, and  not easy to find in the literature. We also invite the reader to verify these statements directly for the derived category of a commutative noetherian ring.

\begin{proposition}
\label{pr:dualisable}
Let $X$ be an object in ${\cat T}$. The following statements hold.
\begin{enumerate}[\quad\rm(1)]
\item
The object $X$ is dualisable if and only if there is a map $\eta\colon \one \to X\otimes \swd X$ making the following diagram commute
\[
\begin{tikzcd}
\one \arrow{d} \arrow["\eta"]{r} & X\otimes \swd X \arrow["\gamma"]{d} \\
\fHom(X,X) \arrow[leftarrow]{r} & \swd X\otimes X
\end{tikzcd}
\]
The vertical map on the left is the adjoint to the isomorphism $\one \otimes X \iso X$.
\item
If $X$ is dualisable so is $\swd X$ and $\rho\colon X\to \swd\swd X$ is an isomorphism.
\item
If either  $X$ or $Z$ is dualisable, then the map \eqref{eq:nu} is an isomorphism.
\item
If $X$ is dualisable and $C\in{\cat T}$ is compact, then $C\otimes X$ is compact.
\item
If $X$ is dualisable it is functionally compact; the converse holds if ${\cat T}$ is generated by a set of dualisable objects.
\item
If $X$ is functionally compact and $\one$ is compact, then $X$ is compact.
\end{enumerate}
\end{proposition}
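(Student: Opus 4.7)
The plan is to reduce the compactness condition for $X$ to the functional compactness condition via the tensor-hom adjunction, using the compactness of $\one$ as the bridge. The key observation is that the functor $\Hom_{\cat T}(X,-)$ factors through $\fHom(X,-)$ in a precise way once we apply $\Hom_{\cat T}(\one,-)$.

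Specifically, taking $W = \one$ in the adjunction isomorphism
\[
\Hom_{\cat T}(W\otimes X,Y)\cong \Hom_{\cat T}(W,\fHom(X,Y))
\]
and using the unit isomorphism $\one\otimes X\iso X$, we obtain a natural isomorphism
\[
\Hom_{\cat T}(X,Y)\ \cong\ \Hom_{\cat T}(\one,\fHom(X,Y))\,.
\]
I would make this my first step.

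Next, given a set-indexed family $\{Y_i\}$ in $\cat T$, I would chain together three isomorphisms. Starting from $\bigoplus_i \Hom_{\cat T}(X,Y_i)$, the identification above rewrites this as $\bigoplus_i \Hom_{\cat T}(\one, \fHom(X,Y_i))$. Then the hypothesis that $\one$ is compact produces an isomorphism with $\Hom_{\cat T}(\one, \bigoplus_i \fHom(X,Y_i))$. Then functional compactness of $X$ identifies $\bigoplus_i \fHom(X,Y_i)$ with $\fHom(X,\bigoplus_i Y_i)$, yielding $\Hom_{\cat T}(\one,\fHom(X,\bigoplus_i Y_i))$, which by the first step equals $\Hom_{\cat T}(X,\bigoplus_i Y_i)$. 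Tracing through the naturality of these identifications shows that the composite is the canonical map, so $X$ is compact.

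There is essentially no obstacle here beyond verifying that the canonical comparison maps agree at each step: each of the three isomorphisms is natural in $\{Y_i\}$, and the structural map used to test compactness is built from these same canonical comparisons, so naturality does the bookkeeping. The only place one must be a little careful is confirming that the isomorphism $\Hom_{\cat T}(X,-)\cong \Hom_{\cat T}(\one,\fHom(X,-))$ transports the canonical map $\bigoplus_i \Hom_{\cat T}(X,Y_i)\to \Hom_{\cat T}(X,\bigoplus_i Y_i)$ to the composite of the canonical maps defining compactness of $\one$ and functional compactness of $X$; this is a direct diagram chase from the adjunction.
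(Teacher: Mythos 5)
Your argument addresses only part (6) of the six-part proposition; parts (1) through (5) are not discussed at all. This is a substantial gap: the statement you were asked to prove is a package of six claims, and the first five — the characterisation of dualisability via the coevaluation map $\eta$, the stability of dualisability under Spanier--Whitehead duality, the isomorphism criterion for $\nu$, the compactness of $C \otimes X$, and the relationship between dualisability and functional compactness — require genuinely different arguments (diagram chases with $\eta$, $\eps$, $\gamma$ for (1)--(3); the Yoneda-style computation $\Hom_{\cat T}(C\otimes X,-)\cong\Hom_{\cat T}(C,\swd X\otimes -)$ for (4); and a localising-subcategory argument for the converse in (5)). None of those is a corollary of the reasoning you gave.

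For the part you did address, your argument is correct and is exactly the paper's approach: the paper's proof of (6) is the one-line instruction ``Apply $\Hom_{\cat T}(\one,-)$ to the isomorphism defining functional compactness,'' and your write-up is simply the unpacking of that sentence, using the adjunction $\Hom_{\cat T}(X,Y)\cong\Hom_{\cat T}(\one,\fHom(X,Y))$, compactness of $\one$, and functional compactness of $X$ in sequence, with naturality handling the identification of the comparison maps. That is fine as far as it goes — but you need to supply proofs of the remaining five parts before this can count as a proof of the proposition.
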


\begin{proof}
(1) When $X$ is dualisable, the map $\swd X\otimes X\to \fHom(X,X)$ is an isomorphism, and we can use its inverse to get a map $\eta\colon \one \to X\otimes \swd X$, and this fits into the commutative diagram as desired. Conversely, given such an $\eta$ a diagram chase shows that the composite
\begin{multline*}  
\fHom(X,Y)  \iso \fHom(X,Y)\otimes\one 
	            \xra{1\otimes \eta} \fHom(X,Y)\otimes X\otimes \swd X \\
	           \xra{\eps\otimes 1}Y\otimes \swd X 	 \xra{\gamma} \swd X\otimes Y
\end{multline*}  
is the inverse of the map $\swd X\otimes Y\to \fHom(X,Y)$.

(2) Given $\eta\colon \one \to X \otimes \swd X$ as in (1), the composite 
\[ 
\one \xrightarrow{\eta} X \otimes   \swd{X} \xrightarrow{\rho\otimes 1}
  \swd\swd{X} \otimes  \swd{X} \xrightarrow{\gamma}  \swd{X} \otimes  \swd\swd{X}
\]
plays the role of $\eta$ for $ \swd{X}$, so again using (1), $ \swd{X}$ is dualisable. A diagram chase shows that an inverse for $\rho\colon X \to  \swd\swd{X}$ is given by the composite
\begin{multline*}  
\swd\swd{X} \iso \one\otimes \swd\swd{X} \xrightarrow{\eta\otimes 1}
X \otimes \swd{X} \otimes \swd\swd{X} \\
	\xrightarrow{1 \otimes \gamma} X \otimes \swd\swd{X} \otimes \swd{X} \xrightarrow{1\otimes \eps}
X \otimes \one \iso X\,.
\end{multline*}

(3) If $X$ is dualisable, then an inverse for $\nu$ is given by the composite
\begin{multline*} 
\fHom(X,Y\otimes Z)\iso
\fHom(X,Y\otimes Z) \otimes \one \xrightarrow{1\otimes \eta}
\fHom(X,Y\otimes Z) \otimes X \otimes \swd{X} \\ 
\xrightarrow{\eps\otimes 1} 
Y \otimes Z \otimes \swd{X} \xrightarrow{\gamma}
\swd{X} \otimes Y \otimes Z \xrightarrow{\nu\otimes 1} \fHom(X,Y) \otimes Z\,. 
\end{multline*}

If $Z$ is dualisable then using (2) we have a commutative diagram
\[ 
\begin{tikzcd}
\fHom(X,Y)\otimes Z \arrow[d, "{1\otimes\rho}" swap, "\sim" labl] \arrow["\nu"]{r} 
	& \fHom(X,Y\otimes Z) \arrow[d,  "{(1\otimes\rho)_*}" swap, "\sim" labl] \\
\fHom(X,Y) \otimes \swd\swd{Z}  \arrow[d, "{\gamma}" swap, "\sim" labl] \arrow[r, "\nu"]
	 & \fHom(X,Y\otimes \swd\swd{Z}) \arrow[d, "{\gamma_*}" swap, "\sim" labl] \\
\swd\swd{Z} \otimes\fHom(X,Y) \ar[d, "\nu" swap, "\sim" labl] 
	&\fHom(X,\swd\swd{Z} \otimes Y)\ar[d,"{\nu_*}" swap, "\sim" labl]\\
\fHom(\swd{Z},\fHom(X,Y)) \ar[d,"\sim" labl]
	&\fHom(X,\fHom(\swd{Z},Y)) \ar[d,"\sim" labl]  \\
\fHom(\swd{Z} \otimes X,Y)\ar[r, "{\gamma^*}" swap, "\sim"]
	&\fHom(X \otimes \swd{Z},Y)
\end{tikzcd}
 \]
The vertical maps are all isomorphisms, as is the bottom horizontal map, and therefore so is the top horizontal map.

(4) This follows from the isomorphisms of functors
  \[
 \Hom_{\cat T}(C\otimes X,-)\cong\Hom_{\cat T}(C,\fHom(X,-))
    \cong\Hom_{\cat T}(C,\swd{X}\otimes-)\,.
\]

(5) For any set of objects $\{Y_i\}$ there is a commutative diagram
\[
\begin{tikzcd}
\bigoplus_i \swd{X}\otimes Y_i  \arrow{d} \arrow["\sim"]{r}  & \swd{X} \otimes \bigoplus_i Y_i  \arrow{d}  \\
\bigoplus_i \fHom(X,{Y_i}) \arrow{r}  & \fHom(X,{\bigoplus_i Y_i})
\end{tikzcd}
\]
When $X$ is dualisable, the two vertical maps are isomorphisms and hence so is the  lower horizontal map, and hence $X$ is functionally compact. 

Conversely, suppose $X$ is functionally compact. Then the lower horizontal map in the diagram above is an isomorphism, and so it follows  that the collection of objects $Y$ for which the map $\swd{X}\otimes Y \to \fHom(X,Y)$ is an isomorphism form a localising subcategory of $\cat T$. By (3), it contains the dualisable objects, so when $\cat T$ is generated by such objects, we deduce that $X$ is dualisable.

(6) Apply $\Hom_{\cat T}(\one,-)$ to the isomorphism defining functional compactness.
\end{proof}

We collect some sundry consequences of the preceding result, for later use. 

\begin{remark}
\label{re:dualisable}
Let $(\cat T,\otimes,\one)$ be a compactly generated tensor triangulated category. 
Proposition~\ref{pr:dualisable} implies that when $\one$ is compact
any dualisable object is compact. The inclusion $\duals{\cat T} \subseteq \comp{\cat T}$ may be strict; see~\ref{ch:KProj}. 

The subcategory $\duals{\cat T}$ is thick, and closed under tensor products, function objects, and hence  also under Spanier--Whitehead duality. On
the other hand, the compact objects in $\cat T$ form a thick subcategory, but may not be closed under tensor products or Spanier--Whitehead duality; see~\ref{ch:KProj}. Thus when compact objects and dualisable objects coincide, $\comp{\cat T}$ is a tensor triangulated subcategory of $\cat T$, with unit $\one$ and the same function object.

The condition that  $\comp{\cat T}=\duals{\cat T}$ is equivalent to $\cat T$ having  a set of generators that are both compact and dualisable.   Hovey, Palmieri, and Strickland~\cite{Hovey/Palmieri/Strickland:1997a} call such a category a \emph{unital algebraic stable homotopy category}; Balmer and Favi~\cite{Balmer/Favi:2011a} use the term \emph{rigidly compactly generated category}.
\end{remark}

\section{Commutative noetherian rings}
\label{se:ca-rings}
Next we describe the compactly generated tensor triangulated categories that are the focus of this work. Throughout $A$ is a commutative noetherian ring. We write  $\dcat A$ for the derived category of $A$ and $\dbcat A$ for the subcategory consisting of $A$-complexes $M$ such that the $A$-module $H(M)\colonequals \bigoplus_i H^i(M)$ is finitely generated.

\begin{chunk}
\label{ch:com-tt}
The derived category of $A$ is a compactly generated triangulated category, with compact objects the perfect complexes, namely, those that are isomorphic in $\dcat A$ to bounded complexes of finitely generated projective $A$-modules; equivalently, the objects in $\Thick(A)$. See, for instance, \cite[\S9.2]{Krause:2022a}. One has that
\[
\Thick(A)\subseteq \dbcat A\,;
\]
equality holds if and only if $A$ is regular, that is to say, for each $\fp\in \Spec A$, the local ring $A_\fp$ is regular. This is just a reinterpretation of the classical characterisation, due to Auslander, Buchsbaum, and Serre \cite[Theorem~2.2.7]{Bruns/Herzog:1998a}, of regular local rings as the local rings of finite global dimension, along with the observation, due to Bass and Murthy that, for objects in $\dcat R$, finite projective dimension can be tested locally; see~\cite[Theorem~4.1]{Avramov/Iyengar/Lipman:2010a}.

The derived tensor product, $-\lotimes_A-$ endows $\dcat A$ with a structure of a tensor triangulated category with unit $A$ and function object $\RHom_A(-,-)$. The unit $A$ generates $\dcat A$, and is  compact and dualisable, so compact objects and dualisable objects coincide.

As to the reflexive objects in $\dcat A$: For an object $X$ in $\dbcat{A}$ the natural map $X\to \swd \swd X$ is an isomorphism if and only if $X$ has finite Gorenstein dimension \cite[Theorem~2.4.7]{Christensen:2000a}. Such an $X$ is not necessarily compact. Indeed, when $A$ is Gorenstein any $X$ in $\dbcat{A}$ has finite Gorenstein dimension, but $\Thick(A)=\dbcat A$ if and only if $A$ is regular.
\end{chunk}

\subsection*{Local cohomology and localisation}
Fix a prime ideal $\fp$ in $A$. An $A$-complex $X$ in $\dcat A$ is \emph{$\fp$-local} if the natural map $X\to X_\fp$ is an isomorphism in $\dcat A$. Since localisation is an exact functor, this conditions is equivalently to the condition that the map  $H(X)\to H(X)_\fp$ of $A$-modules is bijective.

An $A$-complex $X$ is \emph{$\fp$-torsion} if $X_\fq\cong 0$ in $\dcat A$ for each $\fq\not\supseteq \fp$. Once again, it is clear that $X$ is $\fp$-torsion if and only $H(X)$ is $\fp$-torsion; equivalently, each $A$-module $H^i(X)$ is $\fp$-torsion. An $A$-module is $\fp$-torsion precisely when, for each $x\in M$ there exists an integer $s\ge 0$ such that $\fp^s\cdot x=0$; this explains the terminology. 

It is straightforward to check that the class of $\fp$-torsion $A$-complexes is a localising subcategory of $\dcat A$. Its inclusion into $\dcat A$ admits a right adjoint, $\rgam_{\fp}(-)$, the classical local cohomology functor with respect to the (Zariski) closed subset of $\Spec A$ defined by $\fp$; see \cite[\S3.5]{Bruns/Herzog:1998a}, and also \cite[\S9]{Benson/Iyengar/Krause:2008a}.
 
We are interested in the class of $\fp$-local $\fp$-torsion objects, namely, the subcategory
\begin{equation}
\label{eq:p-local-p-torsion}
\Gamma_{\fp} {\dcat A} \colonequals \{X\in\dcat A\mid \rgam_{\fp}(X_\fp)\cong X\}.
\end{equation}
This is a localising tensor ideal  in $\dcat A$, and even minimal, in that the only localising subcategory properly contained in $\gam_{\fp} \dcat A$ is $0$, by \cite[Theorem~2.8]{Neeman:1992a}. Said otherwise, $\dcat A$ is \emph{stratified} by the $A$ action on $\dcat A$, in the sense of \cite{Benson/Iyengar/Krause:2011a}. This has the consequence that localising subcategories of $\dcat A$ are in bijection with the subsets of $\Spec(A)$. One can thus view the categories $\Gamma_{\fp} \dcat A$ as the building blocks of the triangulated category $\dcat A$. And so it is of interest to investigate the objects in it. This is what we do in Section~\ref{se:local-dualisable}.

To wrap up this section, we give an example of a compactly generated tensor triangulated category where the unit is compact, so dualisable objects are compact, but not every compact object is dualisable. It also has the feature that the tensor product of compact objects is not always compact.

\begin{chunk}
\label{ch:KProj}
Let $A$ be a commutative noetherian ring and $\bfK(\Proj A)$ the homotopy category of complexes of projective
$A$-modules. This is a compactly generated triangulated category, with a
triangle equivalence
\[
{\dbcat{A}}^\op \longiso  \comp{\bfK(\Proj A)}
\]
given by the assignment $M\mapsto (\bfp M)^*$, where $\bfp M$ is a projective resolution of $M$ and $(-)^*\colonequals \Hom_A(-,A)$; see \cite{Jorgensen:2005a}.

We endow $\bfK(\Proj A)$ with a structure of  a tensor triangulated category with tensor product the usual tensor product over $A$. The unit for this tensor product is $A$. By Brown representability, the inclusion $\bfK(\Proj A)\to\bfK(\Mod A)$ has a right adjoint $\bfq\colon\bfK(\Mod A)\to\bfK(\Proj  A)$. It is easy to verify that $\bfq$ preserves
function objects. Thus
\[
\fHom(X,Y) \cong \bfq \Hom_A(X,Y)\qquad (X,Y\in \bfK(\Proj A))\,.
\]
Evidently $A$ is compact, so dualisable objects in $\bfK(\Proj A)$ are compact.

We claim that the subcategory of dualisable objects in $\bfK(\Proj A)$ is precisely  $\Thick(A)$, the bounded complexes of finitely generated projective modules. 

Indeed, fix a dualisable object; since it is compact we can assume it is of the form $(\bfp M)^*$, for some $M$ in $\dbcat{ A}$.  Moreover since $A$ is noetherian, we can assume $\bfp M$ consists of finitely generated projective $A$-modules, and that $(\bfp M)^i=0$ for $i\gg 0$. Then the Spanier--Whitehead dual of $(\bfp M)^*$ is
\[
\fHom((\bfp M)^*,A) \cong \bfq \Hom_A((\bfp M)^*,A) \cong \bfq (\bfp M) \cong \bfp M 
\]
where the second isomorphism holds because of the structure of $\bfp M$ and the last one holds because $\bfp M$ is already in $\bfK(\Proj A)$. In particular $\bfp M$ is also dualisable, being the Spanier--Whitehead dual of a dualisable object. But then it is also compact. Observe that $\bfp M$ is in $\mathrm{Loc}(A)$, so compactness implies that it is in $\Thick(A)$. It remains to observe that then so is $(\bfp M)^*$.

Suppose now that $A$ is \emph{singular}; this condition is equivalent to the existence of finitely generated $A$-modules of infinite projective dimension.
Then for any $M$ in $\dbcat{A}$ of infinite projective dimension the complex $(\bfp M)^*$ is compact in $\bfK(\Proj A)$ but it is not dualisable. Moreover the Spanier--Whitehead dual of the compact object $(\bfp M)^*$ is $\bfp M$ and this will not be compact, by the argument above. 

For $M,N$ in $\dbcat A$ the natural map is an isomorphism:
\[
(\bfp M)^*\otimes_A (\bfp N)^*\longiso \Hom_A(\bfp M, (\bfp N)^*)\,.
\]
In particular the cohomology of the object on the left is $\Ext_A(M, \RHom_A(N,A))$. When $A$ is singular and Gorenstein the cohomology of any compact object in $\bfK(\Proj A)$ is bounded. However one can find $M,N$ such that the cohomology of  $(\bfp M)^*\otimes_A (\bfp N)^*$ is not bounded, so the tensor product  will not be compact.
\end{chunk}

\section{Local dualisable objects in $\dcat A$}
\label{se:local-dualisable}

Let $A$ be a commutative noetherian ring and $\dcat A$ the derived category of $A$-modules, with the usual structure of a tensor triangulated category; see~\ref{ch:com-tt}.  As noted there, the dualisable objects and compact objects in $\dcat A$ coincide, and are precisely the perfect complexes in $\dcat A$. In this section we focus on the dualisable objects in $\Gamma_{\fp} \dcat A$, the category of $\fp$-local and $\fp$-torsion objects in $\dcat A$, for $\fp$ a prime ideal in $A$; see \eqref{eq:p-local-p-torsion}.

Fix a prime ideal $\fp$.  It is straightforward to verify that when $X$ and $Y$ are $\fp$-local and $\fp$-torsion, so is $X\lotimes_AY$; that is to say, the triangulated category $\Gamma_{\fp}\dcat A$ inherits a tensor product from $\dcat A$. With this tensor product $\Gamma_{\fp}\dcat A$ is tensor triangulated, with unit $\rgam_{\fp}A_\fp$, and function object
\[
\fHom(X,Y) \colonequals \rgam_{\fp}\RHom_A(X,Y)\,.
\]
The thick subcategory of compact objects in $\Gamma_{\fp}\dcat A$ has a simple structure, in that it is minimal.  The unit  $\rgam_{\fp}(A_\fp)$  is compact only when $\fp$ is a minimal prime ideal in $A$. So, typically, there are more dualisable than compact objects in $\Gamma_{\fp} {\dcat A}$.

Here is a characterisation of the  dualisable objects in this category, in terms of their cohomology. We write $k(\fp)$ for  $A_\fp/\fp A_\fp$, the residue field of the local ring $A_\fp$ and $\Si$ for the suspension, or shift, functor in a triangulated category.

\begin{theorem}
\label{th:ca-ring}
Let $A$ be a commutative noetherian ring and $\fp$ a prime ideal in $A$.  For each $\fp$-local and $\fp$-torsion $A$-complex $X$ the following conditions are equivalent.
\begin{enumerate}[\quad\rm(1)]
\item
$X$ is dualisable in $ \Gamma_{\fp}\dcat A$.
\item
$\rank_{k(\fp)}H(k(\fp)\lotimes_{A}X)$ is finite.
\item
$X$ is in $\Thick(\rgam_{\fp}(A_\fp))$.
\end{enumerate}
If moreover $\rank_{k(\fp)}H(k(\fp)\lotimes_{A}X)=1$, then $X\cong \Si^s\rgam_{\fp}(A_\fp)$ for some integer $s$.
\end{theorem}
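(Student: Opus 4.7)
The plan is to establish the equivalences $(1)\Leftrightarrow(2)\Leftrightarrow(3)$ and then deduce the final assertion of the theorem.

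The implication $(3)\Rightarrow(1)$ is immediate: $\rgam_{\fp}(A_\fp)$ is the tensor unit of $\Gamma_{\fp}\dcat A$, hence dualisable, and by Remark~\ref{re:dualisable} the dualisable objects form a thick subcategory. For $(1)\Rightarrow(2)$, the residue field $k(\fp)$ has finite length over $A_\fp$ and so is compact in $\Gamma_{\fp}\dcat A$; by Proposition~\ref{pr:dualisable}(4), if $X$ is dualisable then $k(\fp)\lotimes_{A} X$ is compact, with cohomology of finite length over $A_\fp$. That cohomology is annihilated by $\fp$, so it is a $k(\fp)$-vector space and finite length agrees with finite $k(\fp)$-rank.

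The heart of the matter is $(2)\Rightarrow(3)$, which I would prove by induction on $r\colonequals \rank_{k(\fp)} H(k(\fp)\lotimes_{A} X)$. For the base case $r=0$, one has $k(\fp)\lotimes_{A} X=0$. Minimality of $\Gamma_{\fp}\dcat A$ (by stratification) shows that if $X$ were nonzero then $k(\fp)\in \operatorname{Loc}(X)$, whence tensoring yields $k(\fp)\lotimes_{A} k(\fp)\in \operatorname{Loc}(k(\fp)\lotimes_{A} X)=0$; but $H^0(k(\fp)\lotimes_{A} k(\fp))=k(\fp)\neq 0$, a contradiction. So $X=0$ lies trivially in $\Thick(\rgam_{\fp}(A_\fp))$. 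For the inductive step, one wants to construct a morphism $f\colon \Si^{s}\rgam_{\fp}(A_\fp)\to X$ whose image under $k(\fp)\lotimes_{A}(-)$ is nonzero on $H^{-s}$. Its cofibre $C$ then lies in $\Gamma_{\fp}\dcat A$ with $\rank_{k(\fp)} H(k(\fp)\lotimes_{A} C)=r-1$ (by the long exact sequence in cohomology), so by induction $C\in\Thick(\rgam_{\fp}(A_\fp))$, and the triangle $\Si^{s}\rgam_{\fp}(A_\fp)\to X\to C$ forces $X\in\Thick(\rgam_{\fp}(A_\fp))$.

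The principal obstacle is the construction of the lift $f$. Using the unit property of $\rgam_{\fp}(A_\fp)$ in $\Gamma_{\fp}\dcat A$, one identifies $\Hom(\Si^{s}\rgam_{\fp}(A_\fp),X)=H^{-s}(X)$, and one needs a preimage, along the natural map $H^{-s}(X)\to H^{-s}(k(\fp)\lotimes_{A} X)$ induced by $A\to k(\fp)$, of a suitably chosen basis class. My preferred approach is to pass through the Dwyer--Greenlees equivalence between $\Gamma_{\fp}\dcat A$ and the derived category of $\fp$-adically complete $\widehat{A_\fp}$-modules: under this equivalence $\rgam_{\fp}(A_\fp)$ corresponds to $\widehat{A_\fp}$, and the hypothesis $r<\infty$ translates, via an Auslander--Buchsbaum--Serre style criterion over the complete local ring $\widehat{A_\fp}$, into the assertion that the counterpart $\hat X$ is a perfect complex over $\widehat{A_\fp}$. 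A minimal free resolution of $\hat X$ then supplies the desired lifts on the complete side, which transport back to $\Gamma_{\fp}\dcat A$ to furnish $f$; alternatively, a direct argument works by choosing $-s$ to be the top cohomological degree of $H(k(\fp)\lotimes_{A} X)$ and invoking the edge-term surjectivity in the universal coefficient spectral sequence, once a boundedness-above statement for $H(X)$ is secured.

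For the moreover statement, suppose $r=1$, so that $H(k(\fp)\lotimes_{A} X)\cong k(\fp)$ is concentrated in a single cohomological degree $-s$. A single application of the inductive step produces $f\colon \Si^{s}\rgam_{\fp}(A_\fp)\to X$ whose cofibre $C$ satisfies $k(\fp)\lotimes_{A} C=0$; the base case then forces $C=0$, so $f$ is an isomorphism and $X\cong \Si^{s}\rgam_{\fp}(A_\fp)$.
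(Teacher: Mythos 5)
Your outline of the implications $(3)\Rightarrow(1)$ and $(1)\Rightarrow(2)$ is sound; the latter differs from the paper, which tensors with the Koszul complex and uses a K\"unneth factorisation rather than applying Proposition~\ref{pr:dualisable}(4) to $k(\fp)$ directly, but both routes are legitimate and essentially equivalent, resting on the fact that the compact objects of $\Gamma_{\fp}\dcat A$ are precisely those with finite length cohomology.

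The gap is in the inductive step of $(2)\Rightarrow(3)$, and specifically in your identification
\[
\Hom_{\Gamma_{\fp}\dcat A}\bigl(\Si^{s}\rgam_{\fp}(A_\fp),X\bigr) = H^{-s}(X)\,.
\]
This is false. The unit $\rgam_{\fp}(A_\fp)$ is not $A_\fp$ itself, and by Greenlees--May duality one has $\RHom_{A_\fp}(\rgam_{\fp}A_\fp,X)\cong\llam^{\fp}X$, so the correct identification is $\Hom(\Si^{s}\rgam_{\fp}(A_\fp),X)\cong H^{-s}(\llam^{\fp}X)$. A concrete illustration: for $A=\mathbb{Z}_{(p)}$, $\fm=(p)$, and $X=\rgam_{\fm}A\simeq\Si^{-1}(\mathbb{Z}/p^{\infty})$ one has $H^0(X)=0$ but $\Hom(\rgam_{\fm}A,\rgam_{\fm}A)\cong\wh A=\mathbb{Z}_p\neq 0$. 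This is exactly why the paper transfers the entire induction to the complete side via $\llam^{\fm}$ (Proposition~\ref{pr:ca-ring} and Lemma~\ref{le:GM-equivalence}): there, $\Hom(\Si^{s}\wh A,\llam^{\fm}X)=H^{-s}(\llam^{\fm}X)$ behaves as you would want of $H^{-s}(X)$, and the $\fm$-completeness of $\llam^{\fm}X$ gives a Nakayama-type argument (including the needed boundedness of $H(\llam^{\fm}X)$ from boundedness of $H(k\lotimes_AX)$) to produce the nonvanishing class. Your ``preferred approach'' invokes the Dwyer--Greenlees equivalence but then asserts that finiteness of $r$ translates ``via an Auslander--Buchsbaum--Serre style criterion'' into $\hat X$ being perfect over $\wh{A_\fp}$ --- but this assertion \emph{is} the content of the hard direction and is not ABS (which concerns regular rings); it needs the induction carried out on the complete side, which you have not done. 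The alternative ``edge-term surjectivity'' route likewise leaves the crucial nonvanishing and boundedness steps unjustified. Until the $\Hom$ computation is corrected and the construction of the lift is actually executed on the complete side, the proof of $(2)\Rightarrow(3)$ --- and hence of the ``moreover'' clause, which depends on it --- is incomplete.

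Two minor remarks: your base case uses stratification and tensoring with $k(\fp)$, whereas one can argue more elementarily that $k(\fp)\lotimes_AX=0$ forces $\fp\notin\supp_AX$, while $X$ $\fp$-torsion forces $\supp_AX\subseteq\{\fp\}$, so $X=0$. Also the phrase ``boundedness-above statement for $H(X)$'' should be about $\llam^{\fm}X$, not $X$, once the $\Hom$ identification is corrected.
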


As will be clear from the proof, the implications (1)$\Rightarrow$(2) and  (3)$\Rightarrow$(1) are elementary to verify. The  implication (2)$\Rightarrow$(3) is the non-trivial one, and its proof takes most of the work in this section; it makes critical use of derived   completions.  There is a simpler proof when the  ring $A_\fp$ has finite global dimension; see \cite{Benson/Iyengar/Krause/Pevtsova:2023b}.

\subsection*{Derived completions}
Given an ideal $I$ in $A$ and an $A$-module $M$, the \emph{$I$-adic completion} of $M$ is the inverse limit
\[
\lam^IM \colonequals \lim_n (\cdots  \twoheadrightarrow M/I^{n+1}M\twoheadrightarrow M/I^nM \twoheadrightarrow \cdots \twoheadrightarrow M/IM)\,,
\]
where the surjections are the natural ones. The canonical maps $M\to M/I^nM$ induce a map $M\to \lam^IM$; when this is bijective we say $M$ is \emph{classically $I$-complete}. 

Given an $A$-complex $M$ we write $\llam^IM$ for the left derived functor of the completion; see \cite{Greenlees/May:1992a}.  This comes equipped with a morphism $M\to \llam^IM$ in $\dcat A$, and the complex $M$ is said to be \emph{$I$-complete} when this map is a quasi-isomorphism. A complex $M$ is $I$-complete if and only if  $H^i(M)$ is $I$-complete for each $i$.  A caveat: classically complete $A$-modules are complete, but the converse does not hold; see \cite[Example~1.4]{Greenlees/May:1992a} and also \cite[Example~2.4]{Bhatt:2019a}.

When $M$ is an $A$-module, there is natural surjective map $H^0(\llam^IM)\to \lam^IM$. This is an isomorphism when $M$ is a finitely generated, and then $H^i(\llam^IM)=0$ for $i\ge 1$, that is to say, there is an isomorphism $\llam^IM\cong \lam^I M$ in $\dcat A$ for any finitely generated $A$-module $M$. 
In particular, $\llam^IA\cong \lam^IA$; this observation is used implicitly in the sequel.

The derived local cohomology functor $\rgam_I$ and the derived $I$-adic completion functor $\llam^{I}$ form an adjoint pair:
\begin{equation}
\label{eq:GM}
\begin{tikzcd}
\dcat A \arrow[rightarrow,yshift=-.75ex,swap,rr,"\llam^I"]
 	&& \arrow[rightarrow,yshift=.75ex,ll,swap,"\rgam_I"] \dcat A\,.
\end{tikzcd}
\end{equation}
This is the Greenlees-May duality.  It restrict to an equivalence between the $I$-torsion  and $I$-complete complexes, and so one has natural isomorphisms
\begin{equation}
\label{eq:gam-lam}
\rgam_I M \cong \rgam_I \llam^I M \qquad\text{and}\qquad  \llam^I \rgam_I M\cong \llam^I M \,.
\end{equation}
For a proof of these results, and  for a different perspective on completions, as a localisation, see \cite{Dwyer/Greenlees:2002a}, and also \cite[Tag091N]{StacksProject}.

The result below is a crucial step in the proof of Theorem~\ref{th:ca-ring}. 

\begin{proposition}
\label{pr:ca-ring}
Let $A$ be a local ring with maximal ideal $\fm$ and residue field $k$, and  let $\wh A$ be the $\fm$-adic completion of $A$. The following statements hold for any object $X\in \dcat A$ that is $\fm$-complete.
\begin{enumerate}[\quad\rm(1)]
\item
If $H(k\lotimes_AX)$ is bounded, then  the natural map 
\[
X \lra k\lotimes_{A}X 
\]
induced by the surjection $A\to k$, is nonzero in homology.
\item
If $\rank_kH(k\lotimes_AX)$ is finite, then $X$ is in $\Thick(\wh A)$. 
\item
If $H(k\lotimes_AX)\cong \Si^sk$ for some integer $s$,  then $X\cong \Si^s \wh A$. 
\end{enumerate}
\end{proposition}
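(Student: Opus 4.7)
The plan is to prove the three parts in order: (1) is the technical heart, and (2), (3) will follow by induction on $\rank_k H(k\lotimes_A X)$, using (1) as the key input.

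For (1), the proof I have in mind proceeds by contradiction. Suppose $\lambda\colon X\to k\lotimes_A X$ is zero on cohomology. The short exact sequence $0\to\fm\to A\to k\to 0$ yields an exact triangle
\[
\fm\lotimes_A X \lra X \xra{\lambda} k\lotimes_A X \lra \Si(\fm\lotimes_A X),
\]
and the vanishing of $H(\lambda)$ splits its long exact sequence into short exact sequences
\[
0\to H^{i-1}(k\lotimes_A X)\to H^i(\fm\lotimes_A X)\to H^i(X)\to 0.
\]
The crucial structural input is that $X$ is $\fm$-complete: since $\wh A=\llam^\fm A$ and $\llam^\fm$ is left adjoint to the inclusion of $\fm$-complete complexes, one obtains natural isomorphisms $\Hom_{\dcat A}(\wh A,\Si^s X)\cong H^s(X)$. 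My plan is to exploit these two facts, together with the boundedness of $H(k\lotimes_A X)$, to run a derived Nakayama-type argument: the simultaneous vanishing of all $H^s(\lambda)$, combined with the bounded cohomological range on the right, should force $X\cong 0$, contradicting the implicit hypothesis $X\neq 0$.

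For (2), I would induct on $n=\rank_k H(k\lotimes_A X)$. The base case $n=0$ reduces to $k\lotimes_A X\cong 0$, whence derived Nakayama for $\fm$-complete $X$ yields $X\cong 0\in\Thick(\wh A)$. For the inductive step ($n\geq 1$), apply (1) to find some integer $s$ with $H^s(\lambda)$ nonzero and a class $\xi\in H^s(X)$ mapping to a nonzero element of $H^s(k\lotimes_A X)$. Via the adjunction identification, $\xi$ corresponds to a morphism $\phi\colon\Si^{-s}\wh A\to X$, which I complete to an exact triangle
\[
\Si^{-s}\wh A \xra{\phi} X \lra X' \lra \Si^{1-s}\wh A.
\]
The cofiber $X'$ is $\fm$-complete as a cone of $\fm$-complete objects. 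Tensoring with $k$ and using $k\lotimes_A\wh A\cong k$ produces a triangle $\Si^{-s}k\to k\lotimes_A X\to k\lotimes_A X'$ in $\dcat k$; its long exact sequence, combined with the nonvanishing of $\xi\bmod\fm$, gives $\rank_k H(k\lotimes_A X')=n-1$. By induction $X'\in\Thick(\wh A)$, and the triangle then places $X\in\Thick(\wh A)$.

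Part (3) follows by applying the inductive step of (2) once with $n=1$: the cofiber $X'$ satisfies $k\lotimes_A X'\cong 0$, so derived Nakayama forces $X'\cong 0$ and hence $X$ is a suspension of $\wh A$ in the degree dictated by the position of the surviving cohomology of $k\lotimes_A X$, matching $\Si^s\wh A$. The main obstacle will be (1): the statement is delicate because ``$\lambda$ zero on cohomology'' is strictly weaker than ``$\lambda$ zero as a morphism'', and because the cohomology modules $H^i(X)$ of an $\fm$-complete complex need not be finitely generated or even classically $\fm$-complete. Consequently, the derived Nakayama argument must be carried out at the level of the complex itself rather than term-by-term in cohomology.
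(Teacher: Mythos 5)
Your treatment of parts (2) and (3) by induction on $r=\rank_k H(k\lotimes_A X)$ is essentially the argument the paper uses: the base case $r=0$ is handled by the equivalence between torsion and complete objects (what you call ``derived Nakayama''), and the inductive step uses (1) to produce a map $\Si^s\wh A\to X$ whose cofibre drops the rank by one. The adjunction identification $\Hom_{\dcat A}(\wh A,\Si^s X)\cong H^s(X)$ for $\fm$-complete $X$, and the computation $k\lotimes_A\wh A\cong k$, are both correct, and the bookkeeping for $\rank_k H(k\lotimes_A X')=r-1$ works because the map $\Si^{-s}k\to k\lotimes_A X$ is injective on cohomology (being nonzero out of a one-dimensional space). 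So those two parts are fine.

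Part (1), however, is a genuine gap, and you acknowledge as much at the end of your write-up. You outline a proof by contradiction and say you would like to ``run a derived Nakayama-type argument'' using the splitting of the long exact sequence, but no such argument is given. The difficulty is real: the hypothesis ``$H(\lambda)=0$'' is strictly weaker than ``$k\lotimes_A X\cong 0$'', so the Greenlees--May equivalence cannot be applied directly, and the short exact sequences you extract do not by themselves force $X\cong 0$. The paper's proof of (1) is direct rather than by contradiction and rests on two ingredients that are absent from your sketch: first, a Koszul complex devissage showing that boundedness of $H(k\lotimes_A X)$ implies boundedness of $H(X)$, combined with the fact (Simon; Stacks Tag 09B9) that an $\fm$-complete module $N$ with $\fm N=N$ is zero; and second, the observation that at the extremal nonzero degree of $H(X)$ the right-exactness of $k\otimes_A-$ identifies $H(k\lotimes_A X)$ in that degree with $H(X)/\fm H(X)$, so the map $\lambda$ there is the canonical surjection onto a nonzero module. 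Without these two steps one cannot locate a degree where $H(\lambda)$ is provably nonzero, and since (1) is the engine that drives the induction in (2) and (3), the whole argument is incomplete until that gap is filled.
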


\begin{proof}
(1) Since $X$ is  $\fm$-complete so is $H^n(X)$ for each $n$. Thus, if $\fm\cdot H^n(X) = H^n(X)$, then $H^n(X)=0$; see \cite[1.4]{Simon:1990a}, and also \cite[Tag09b9]{StacksProject}. Given this observation, the hypothesis that $H(k\lotimes_AX)$ is bounded implies $H(X)$ is bounded; this can be checked via a standard devissage argument using $H(K\otimes_RM)$, where $K$ is the Koszul complex of $R$.  Set $i=\inf\{n\mid H^n(X)\ne 0\}$. Then the composed map
\[
H^i(X) \lra H^i(k\lotimes_AX) \cong k \otimes_A H^i(X) \cong H^i(X)/\fm H^i(X)\,,
\]
where the first isomorphism holds because the tensor product is right exact, is the obvious surjection and the target is nonzero. This justifies the claim.

(2) We verify this by an induction on the integer $r\colonequals \rank_k H(k\lotimes_AX)$.  The base case is $r=0$. Then $k\lotimes_AX= 0$ in $\dcat A$,  that is to say, $\fm$ is not in $\supp_AX$. Thus $\rgam_{\fm}X\cong 0$. It remains to note that
\[
X\cong \llam^{\fm} X \cong \llam^{\fm} \rgam_{\fm} X \cong 0\,,
\]
where the second isomorphism is from \eqref{eq:gam-lam}.

Suppose $r\ge 1$.  Since $ \Hom_{\dcat A}(\Si^{i}A, -)\cong H^{-i}(-)$,  part (1) is equivalent to the existence of  a map $\Si^s A\to X$ in $\dcat A$ such that the induced map $\Si^s k\to H(k\lotimes_AX)$ is nonzero. Since $X$ is $\fm$-complete, the map $\Si^s A\to X$ factors through $\Si^s \wh A\to X$ and this fits into an exact triangle
\[
\Si^s \wh A\lra X\lra Y \lra \Si^{s+1} \wh A\lra\,.
\]
Evidently $\rank_k H(k\lotimes_AY)=r-1$, so the induction hypothesis yields that $Y$ is in $\Thick(\wh A)$, and hence so is $X$.

(3) When $r=1$, the argument above yields that $\Si^s \wh A\cong X$, as desired.
\end{proof}

\subsection*{A derived Morita equivalence}
Let $A$ be a local ring with maximal ideal $\fm$.  It helps to consider another  adjoint pair:  The  map $A \to \RHom_A(\rgam_\fm A,\rgam_\fm A)$ induces, by \eqref{eq:gam-lam},  a quasi-isomorphism 
\begin{equation}
\label{eq:completion}
\wh A\longrightarrow \RHom_A(\rgam_\fm A,\rgam_\fm A)
\end{equation}
so derived Morita theory  yields adjoint functors
\[
\begin{tikzcd}[column sep = large]
\dcat{\wh A} \arrow[leftarrow,yshift=-1ex,swap,rr, "{\RHom_A(\rgam_\fm A,-)}"]
 	&& \arrow[leftarrow,yshift= 1ex,ll,swap, "{\rgam_{\fm}A\lotimes_{\wh A}-}"] \dcat A \,.
\end{tikzcd}
\]
The functors introduced above give alternative descriptions of the category we are interested in, namely, the thick subcategory generated by $\rgam_{\fm}A$. 

\begin{lemma}
\label{le:GM-equivalence}
The adjoint pairs above restrict to triangle equivalences 
\[
\begin{tikzcd}[column sep=large]
\Thick_{\wh A}(\wh A) \arrow[rr, rightarrow, yshift=.75ex, "{\rgam_{\fm}A\lotimes_{\wh A}-}"] 
	&& \arrow[ll, rightarrow,yshift=-.75ex, "{\RHom_A(\rgam_{\fm}A,-)}"]
		\Thick_A(\rgam_\fm A) \arrow[rightarrow,yshift=-.75ex,swap,rr,"\llam^\fm"]
  	&&  \arrow[rightarrow,yshift=.75ex,ll,swap,"\rgam_\fm"] \Thick_A(\wh A)\,.
\end{tikzcd}
\]
Moreover the pair on the left is compatible with tensor products.
\end{lemma}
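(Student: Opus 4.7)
The strategy has three steps: the right-hand equivalence follows from Greenlees--May duality, the left-hand equivalence is standard derived Morita theory anchored on~\eqref{eq:completion}, and tensor compatibility comes from the idempotence $\rgam_\fm A\lotimes_A \rgam_\fm A\cong \rgam_\fm A$.

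For the right-hand pair, Greenlees--May duality already restricts to a triangle equivalence between the $\fm$-torsion and the $\fm$-complete objects in $\dcat A$. Since $\wh A\cong \llam^\fm A$ is $\fm$-complete and $\rgam_\fm A$ is $\fm$-torsion, their thick closures sit inside these respective subcategories. The identities $\rgam_\fm \wh A\cong \rgam_\fm\llam^\fm A\cong \rgam_\fm A$ and $\llam^\fm\rgam_\fm A\cong \llam^\fm A\cong \wh A$, both from~\eqref{eq:gam-lam}, show that the two functors exchange the distinguished generators. Since triangle functors carry thick subcategories to thick subcategories, the restriction is an equivalence.

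For the left-hand pair, the quasi-isomorphism~\eqref{eq:completion} is precisely the unit of the Morita adjunction evaluated at the generator $\wh A$; the counit at $\rgam_\fm A$ identifies via~\eqref{eq:completion} with the canonical isomorphism $\rgam_\fm A\lotimes_{\wh A}\wh A\iso \rgam_\fm A$. The subcategories on which the unit and counit are isomorphisms are thick and contain the respective generators, so the adjunction restricts to mutually quasi-inverse triangle equivalences. Tensor compatibility then follows because $F\colonequals\rgam_\fm A\lotimes_{\wh A}(-)$ sends the unit $\wh A$ to the unit $\rgam_\fm A$, and at $M=N=\wh A$ the canonical comparison $F(M\lotimes_{\wh A} N)\to F(M)\lotimes_A F(N)$ reduces to the idempotence $\rgam_\fm A\cong \rgam_\fm A\lotimes_A\rgam_\fm A$, valid because the $\fm$-torsion subcategory is a smashing tensor ideal of $\dcat A$ with unit $\rgam_\fm A$; devissage in each variable extends the comparison to all of $\Thick_{\wh A}(\wh A)$.

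The main obstacle I anticipate is making the tensor comparison precise, since the two sides live over different base rings: identifying $(\rgam_\fm A\lotimes_{\wh A} M)\lotimes_A(\rgam_\fm A\lotimes_{\wh A} N)$ with $\rgam_\fm A\lotimes_{\wh A}(M\lotimes_{\wh A} N)$ requires careful use of the bimodule structure on $\rgam_\fm A$ coming from~\eqref{eq:completion}, together with the idempotence isomorphism and associativity. Once this identification is set up on the generators, the rest is a routine devissage.
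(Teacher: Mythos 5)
Your proposal is correct and follows essentially the same route as the paper's (very terse) proof: the right-hand equivalence comes from the Greenlees--May isomorphisms~\eqref{eq:gam-lam}, the left from the Morita adjunction anchored on~\eqref{eq:completion}, and tensor compatibility from idempotence of $\rgam_\fm A$. You spell out the devissage steps that the paper leaves implicit, and you correctly flag the bimodule bookkeeping needed for the tensor comparison; one small directional point is that the natural comparison map runs $F(M)\lotimes_A F(N)\to F(M\lotimes_{\wh A}N)$ (via the counit $\rgam_\fm A\to A$ on one factor) rather than the way you wrote it, but since both are isomorphisms at the generators the devissage goes through unchanged.
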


\begin{proof}
The equivalence on the left follows by the usual argument in Morita theory, given \eqref{eq:completion}.  The equivalence on the right is by the isomorphisms \eqref{eq:gam-lam} for $I=\fm$.
\end{proof}

Composing the equivalences in Lemma~\ref{le:GM-equivalence} yields a triangle equivalence 
\[
\Thick_{\wh A}(\wh A) \longiso \Thick_A(\wh A)\,.
\]
It is easily verified that this is induced by the restriction functor $\bfD(\wh A)\to \dcat A$ arising from the natural map $A\to \wh A$ of rings. 

\begin{proof}[Proof of Theorem~\ref{th:ca-ring}]
We may assume $(A,\fm,k)$ is a local ring and $\fp=\fm$, so that $k(\fp)=k$. Thus $X$ is an $\fm$-torsion $A$-complex.  We recall that $\dcat A$ is a tensor triangulated category, generated by its unit $A$, and so compact objects and dualisable objects in $\dcat A$ coincide. This fact will be used throughout the proof. 

(1)$\Rightarrow$(2): Let $K$ be the Koszul complex on a generating set for the ideal $\fm$.  As $X$ is dualisable the $A$-complex $K\otimes_A X$ is compact, by Proposition~\ref{pr:dualisable}, and so in $\Thick(A)$. Hence the $k$-vector space $H(k\lotimes_A (K\otimes_AX))$ has finite rank. Since $k$ is a field there are isomorphisms
\begin{align*}
H(k\lotimes_A (K\otimes_AX)) 
	&\cong H( (k\otimes_A K)\otimes_k (k\lotimes_AX)) \\
	&\cong H(k\otimes_A K)\otimes_k H(k\lotimes_AX)\,.
\end{align*}		
Observe that $H(k\otimes_AK)$ is nonzero. As the rank of $H(k\lotimes_A (K\otimes_AX))$ is finite, so is that of $H(k\lotimes_AX)$.

(2)$\Rightarrow$(3):  Since $k$ is $\fm$-torsion, the natural map below is an isomorphism:
\[
k\lotimes_A X \longiso k \lotimes_A \llam^{\fm} X 
\]
The hypothesis and Proposition~\ref{pr:ca-ring}  imply that $\llam^{\fm} X$ is in $\Thick(\wh A)$.  Lemma~\ref{le:GM-equivalence} then yields that $\rgam_{\fm} X$ is in $\Thick(\rgam_{\fm }A)$. It remains to recall that $X$ is $\fm$-torsion. 

(3)$\Rightarrow$(1): As $\rgam_{\fm}A$ is the unit of $\rgam_{\fm}\dcat A$, it is dualisable. It remains to note that the dualisable objects form a thick subcategory.

The last part of the theorem follows from Proposition~\ref{pr:ca-ring}(3).
\end{proof}

\subsection*{Balmer spectrum}
Set $\cat T\colonequals \dcat A$ and fix a prime $\fp$ in $\Spec A$. The full subcategory $\duals{(\Gamma_{\fp}\cat T)}$ of dualisable objects in  $\Gamma_{\fp}\cat T$ is an essentially small tensor triangulated category,  with unit $\rgam_{\fp}(A_\fp)$.  The unit generates $\Gamma_{\fp}\cat T$, in the sense of thick subcategories, so thick subcategories are tensor ideal; this follows from Theorem~\ref{th:ca-ring}.

We are interested in the  lattice of thick subcategories of $\duals{(\Gamma_{\fp}\cat T)}$, captured in the Balmer spectrum introduced in ~\cite{Balmer:2005a}. Given Theorem~\ref{th:ca-ring} and Lemma~\ref{le:GM-equivalence}, one can describe the underlying topological space easily. 

\begin{corollary}
\label{co:balmer}
One has a homeomorphism $\bSpec \duals{(\Gamma_{\fp}\cat T)} \cong \Spec(\wh{A}_\fp)$. 
\end{corollary}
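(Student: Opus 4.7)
The plan is to reduce the computation of $\bSpec \duals{(\Gamma_\fp \cat T)}$ to the Hopkins--Neeman--Thomason classification of thick subcategories of perfect complexes over a noetherian commutative ring, restated by Balmer as $\bSpec(\mathrm{perf}(R)) \cong \Spec R$ for any such $R$.

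By Theorem~\ref{th:ca-ring}, the category $\duals{(\Gamma_\fp \cat T)}$ coincides with $\Thick_A(\rgam_\fp(A_\fp))$. Applying Lemma~\ref{le:GM-equivalence} to the noetherian local ring $A_\fp$ with maximal ideal $\fp A_\fp$ produces a triangle equivalence
\[
\rgam_\fp(A_\fp) \lotimes_{\wh{A_\fp}} (-) \colon \Thick_{\wh{A_\fp}}(\wh{A_\fp}) \longiso \Thick_{A_\fp}(\rgam_\fp(A_\fp))
\]
that is compatible with tensor products and carries the unit $\wh{A_\fp}$ to the unit $\rgam_\fp(A_\fp)$ of $\Gamma_\fp\cat T$. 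Since $\wh{A_\fp}$ generates $\dcat{\wh{A_\fp}}$, the left-hand side is precisely the category of perfect complexes over $\wh{A_\fp}$ with its standard derived tensor product. Thus one obtains an equivalence of essentially small tensor triangulated categories $\duals{(\Gamma_\fp \cat T)} \simeq \mathrm{perf}(\wh{A_\fp})$.

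Finally, $\wh{A_\fp}$ is noetherian, being the completion of a noetherian local ring, so the Balmer--Hopkins--Neeman--Thomason theorem gives $\bSpec(\mathrm{perf}(\wh{A_\fp})) \cong \Spec(\wh{A_\fp})$; composing with the equivalence above yields the desired homeomorphism. The one point to watch is that the tensor product on $\Thick_A(\rgam_\fp(A_\fp))$ inherited from $\Gamma_\fp\cat T$ corresponds under the equivalence to $\lotimes_{\wh{A_\fp}}$; but this is exactly the tensor compatibility asserted in Lemma~\ref{le:GM-equivalence}. With that in hand the argument is a direct assembly of results, and I expect no further obstacle.
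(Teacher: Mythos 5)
Your argument is essentially the same as the paper's: reduce to the local ring $A_\fp$, use Theorem~\ref{th:ca-ring} to identify the dualisable objects with $\Thick(\rgam_\fp(A_\fp))$, transport across the tensor-compatible Morita equivalence of Lemma~\ref{le:GM-equivalence} to perfect complexes over $\wh{A}_\fp$, and invoke the Hopkins--Neeman classification in Balmer's formulation. You are somewhat more explicit than the paper about the tensor compatibility and the noetherianity of $\wh{A}_\fp$, but these are points the paper takes for granted rather than genuine differences in approach.
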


\begin{proof}
We can again assume $A$ is local with maximal ideal  $\fp$.   Given Theorem~\ref{th:ca-ring}, the equivalence of categories on the left in Lemma~\ref{le:GM-equivalence} yields an homeomorphism
\[
\bSpec \duals{(\Gamma_{\fp}\cat T)} \cong \bSpec \Thick_{\wh A}(\wh A)\,.
\]
It remains to recall the classification of the thick subcategories of  perfect complex of a commutative noetherian ring, due to Hopkins~ \cite{Hopkins:1987a} and Neeman~\cite{Neeman:1992a}, interpreted in terms of the Balmer spectrum~\cite[Theorem~5.5]{Balmer:2005a}.
\end{proof} 

\begin{remark}
\label{re:completion}
The (Zariksi) spectrum of $\wh{A}_\fp$ can be wildly different from that of $A_\fp$, though they have the same Krull dimension. We offer a few remarks to convey this point. Suppose $A$ is local and $\fp=\fm$, the maximal ideal of $A$. The completion map $A\to \wh A$ induces a homomorphism
\[
\Spec \wh A\lra \Spec A\,.
\]
This map is surjective as $A\to \wh A$ is faithfully flat. Moreover $\dim A=\dim \wh A$. Since $\fm \wh A$ is the maximal ideal of $\wh A$, there is a single point lying over the closed point $\fm$ of $\Spec A$, namely, the closed point of $\Spec \wh A$.  This shows that the Krull dimension of the fibres of the completion map is at most $\dim A - 1$. 

The fibres over other non-closed points can be highly non-trivial. This is so  even over the generic points of $\Spec A$.  It is easy to construct local domains such that the generic formal fibre has more than one point. Here is one example: Consider the local ring
\[
A\colonequals \frac{\mathbb{Q}[x,y]_{(x,y)}}{(x^2 - y^2(y-1))}\,.
\]
Since $x^2-y^2(y-1)$ is irreducible in the ring $\mathbb{Q}[x,y]_{(x,y)}$, the ring $A$ is a domain. However that polynomial factors in the $(x,y)$-adic completion $\mathbb{Q}[\!|x,y|\!]$, so the completion of $A$ is not a domain.

Here is more drastic scenario: Given any pair of integers $d,t$ with $0< t < d-2$, Rotthaus~\cite{Rotthaus:1991a} constructs a noetherian local domain $A$ of Krull dimension $d$ such that the formal fibre over the generic point of $A$ has Krull dimension $t$.
\end{remark}

\subsection*{Reflexive objects} With $A$ and $\fp$ as before, the Spanier--Whitehead dual of an object $X$ in $\Gamma_{\fp}\dcat A$ is 
\[
\swd(X) =  \rgam_{\fp}\RHom_A(X,\rgam_{\fp}(A_\fp))\,.
\]
Recall, from Section~\ref{se:dualisability}, that $X$ is \emph{reflexive} if the natural map $X\to \swd\swd(X)$ is an isomorphism.
Here is the connection between this notion and dualisability.

\begin{lemma}
Let $A$ be a commutative noetherian ring, $\fp$ a prime ideal in $A$, and fix $X$ in $\Gamma_{\fp}\dcat A$. If $X$ is dualisable, it is reflexive; the converse holds when the local ring $A_\fp$ is regular and $H(X)$ is bounded.
\end{lemma}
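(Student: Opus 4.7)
The first implication is immediate from Proposition~\ref{pr:dualisable}(2), which guarantees that every dualisable object in a compactly generated tensor triangulated category is reflexive.

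For the converse, I would localise and assume $A = A_\fp$ is regular local of Krull dimension $d$, with maximal ideal $\fm$ and residue field $k$. By Theorem~\ref{th:ca-ring} it suffices to show that $\rank_k H(k \lotimes_A X) < \infty$. The plan is to transport the problem to $\dcat{\wh A}$ using the Greenlees--May duality \eqref{eq:GM}, which restricts to an equivalence between $\Gamma_\fm \dcat A$ and the full subcategory of $\fm$-complete complexes in $\dcat A$, sending $\rgam_\fm A$ to $\wh A$. Setting $Y \colonequals \llam^\fm X$, so that $X \cong \rgam_\fm Y$, a sequence of applications of the adjunctions associated with \eqref{eq:GM} together with base change along $A \to \wh A$ yields
\[
\swd X \;\cong\; \rgam_\fm \RHom_A(X, \rgam_\fm A) \;\cong\; \rgam_\fm \RHom_{\wh A}(Y, \wh A) \;=\; \rgam_\fm Y^{\vee},
\]
where $Y^\vee \colonequals \RHom_{\wh A}(Y, \wh A)$. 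Iterating, $\swd\swd X \cong \rgam_\fm Y^{\vee\vee}$, and since $\rgam_\fm$ restricts to an equivalence on the $\fm$-complete side, reflexivity of $X$ in $\Gamma_\fm \dcat A$ becomes the isomorphism $Y \cong Y^{\vee\vee}$ in $\dcat{\wh A}$. Boundedness of $H(X)$ transfers to boundedness of $H(Y)$ since $\rgam_\fm A$ is represented by the bounded stable Koszul complex.

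A direct Koszul computation in the regular case yields $k \lotimes_A \rgam_\fm(A) \cong k$ in $\dcat A$: indeed $\rgam_\fm A \cong \Si^{-d} E(k)$ with $E(k)$ the injective hull of $k$, and the finite Koszul resolution of $k$ combined with local duality gives $\operatorname{Tor}^A_d(k, E(k)) \cong k$ with all other Tor groups vanishing. Hence $k \lotimes_A X \cong k \lotimes_{\wh A} Y$. The heart of the argument is then the chain of isomorphisms in $\dcat k$
\[
k \lotimes_{\wh A} Y \;\cong\; k \lotimes_{\wh A} \RHom_{\wh A}(Y^\vee, \wh A) \;\cong\; \RHom_{\wh A}(Y^\vee, k) \;\cong\; \Hom_k\!\bigl(k \lotimes_{\wh A} Y^\vee,\, k\bigr),
\]
whose first step uses reflexivity, second uses the projection formula for the perfect object $k \in \dcat{\wh A}$ (perfect because $\wh A$ is regular), and third uses tensor--Hom adjunction along $\wh A \to k$. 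Applying the same chain to the (also reflexive) object $Y^\vee$ gives $k \lotimes_{\wh A} Y^\vee \cong \Hom_k(k \lotimes_{\wh A} Y, k)$, and substitution shows that $k \lotimes_{\wh A} Y$ is isomorphic in $\dcat k$ to its double $k$-linear dual.

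A bounded complex of $k$-vector spaces is isomorphic to its double $k$-linear dual precisely when each cohomology group is finite-dimensional, so every $H^i(k \lotimes_{\wh A} Y)$ is finite-dimensional; combined with boundedness of $H(Y)$ this forces $\rank_k H(k \lotimes_A X) < \infty$, and Theorem~\ref{th:ca-ring} concludes that $X$ is dualisable. The principal obstacle I foresee is carefully verifying the identification $\swd X \cong \rgam_\fm Y^\vee$ and the consequent translation of reflexivity under the Greenlees--May equivalence; once that bookkeeping is in place, the double-dual argument is essentially formal.
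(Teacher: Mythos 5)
Your argument is correct in its essentials and hinges on the same crux as the paper's proof: deduce from reflexivity and regularity that $k\lotimes X$ is isomorphic to its own double $k$-linear dual, hence has degreewise finite-dimensional cohomology, and then invoke boundedness and Theorem~\ref{th:ca-ring}. The bookkeeping is genuinely different, however. The paper stays entirely in $\dcat A$: it simplifies $\swd\swd X \cong \rgam_\fm\RHom_A(\RHom_A(X,A),A)$ in a four-line Greenlees--May computation and then applies $k\lotimes_A-$, using perfectness of $k$ (from regularity) to pass to $k$-linear duals. You instead transfer the problem to $\dcat{\wh A}$ via the completion functor, set $Y=\llam^\fm X$, and translate reflexivity of $X$ into $Y\cong Y^{\vee\vee}$ over $\wh A$. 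That is sound but carries two extra verification burdens that the paper's route avoids: (i) the base-change identification $\RHom_A(Y,\wh A)\cong\RHom_{\wh A}(Y,\wh A)$, which does hold because $\RHom_A(\wh A,\wh A)\cong\wh A$ by Greenlees--May and then $\RHom_A(Y,\wh A)\cong\RHom_{\wh A}(Y,\RHom_A(\wh A,\wh A))$; and (ii) the check that the natural biduality maps on both sides correspond under these identifications---you correctly flag this as the principal gap to fill. A small simplification: the detour through $\rgam_\fm A\cong\Si^{-d}E(k)$ and $\operatorname{Tor}^A_d(k,E(k))\cong k$ is unnecessary for $k\lotimes_A X\cong k\lotimes_{\wh A}Y$; since $\rgam_\fm$ is smashing and $k$ is $\fm$-torsion one gets $k\lotimes_A\rgam_\fm Y\cong k\lotimes_A Y$ directly, and then $k\lotimes_A\wh A\cong k$ by flatness. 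With those verifications supplied your proof would be complete; the paper's version is shorter because it never leaves $\dcat A$.
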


\begin{proof}
The first part of the statement follows from Proposition~\ref{pr:dualisable}. So it remains to prove that when $A_\fp$ is regular,  $H(X)$ is bounded, and $X$ is reflexive, it is dualisable.

We can replace $A$ by $A_\fp$ and assume it is a regular local ring, say with maximal ideal $\fm$ and residue field $k$. The Spanier--Whithead duality on the category $\Gamma_{\fm}\dcat A$ is the functor
\[
\swd (X) \colonequals \rgam_\fm \RHom_A(X,\rgam_\fm A) \cong \rgam_\fm \RHom_A(X,A)\,.
\]
Thus, keeping in mind Greenlees--May duality~\eqref{eq:GM} one gets that
\begin{align*}
\swd\circ \swd (X) 
	& \cong \rgam_\fm\RHom_A(\rgam_\fm \RHom_A(X,A),A) \\
	& \cong \rgam_\fm\RHom_A(\RHom_A(X,A),\llam^\fm A) \\
	&\cong \rgam_\fm\llam^\fm \RHom_A(\RHom_A(X,A),A)\\
	&\cong \rgam_\fm \RHom_A(\RHom_A(X,A),A)
\end{align*}
Since $A$ is regular, the $A$-module $k$ has a finite free resolution, so the isomorphism $X\iso \swd\circ \swd(X)$, which holds because $X$ is reflexive,  induces isomorphisms
\begin{align*}
k\lotimes_AX 
	&\longiso k\lotimes_A \rgam_\fm \RHom_A(\RHom_A(X,A), A) \\
	&\longiso \RHom_k(\RHom_k(k\lotimes_AX,k),k)
\end{align*}
In homology, this yields that the natural vector-space duality is an isomorphism:
\[
H(k\lotimes_AX)\cong \Hom_k(\Hom_k(H(k\lotimes_AX),k))\,.
\]
Hence $\rank_k H^i(k\lotimes_AX)$ is finite for each $i$. As $A$ is regular, $k$ is in $\Thick(A)$, thus $H(X)$ bounded implies $H(k\lotimes_AX)$ is bounded as well. We deduce that $\rank_k H(k\lotimes_AX)$ is finite, so $X$ is dualisable, by Theorem~\ref{th:ca-ring}. 
\end{proof}

In the preceding result, the condition that $H(X)$ is bounded is required: When $A$ is any local ring,  $X\colonequals \bigoplus_i \Sigma^iA$ is reflexive but not dualisable, for it is not compact.

\subsection*{Strong generation}
Let us return to the general framework of a compactly generated tensor triangulated category $({\cat T},\otimes,\one)$. We are interested in the property that $\cat T^{\mathrm c}$, the thick subcategory consisting of compact objects, has a \emph{strong generator}, in the sense of Bondal and Van den Bergh~\cite{Bondal/Vandenbergh:2003a}. Roughly speaking, an object $G\in \cat T^{\mathrm c}$ is a strong generator if there exists an integer $d$ such that every compact object in $\cat T$ can be built out of $G$ using direct sums, retracts, and at most $d$ extensions. This might be viewed as a regularity condition, for when $A$ is a commutative noetherian ring the category of perfect $A$-complexes  ${\dcat A}^{\mathrm c}$ has a strong generator if and only if the global dimension of $A$ is finite; see \cite[Proposition~7.2.5]{Rouquier:2008a}. 

A question that arises is this: If $\cat T^{\mathrm c}$ has a strong generator, does each category of local dualisable objects also have a strong generator?  The motivation comes from the following result in commutative algebra; we recall that $A_\fp$ is regular precisely when the subcategory of compact objects in $\dcat{A_\fp}$ has a strong generator.

\begin{corollary}
\label{co:regularity}
Let $A$ be a commutative noetherian ring and $\fp$ a prime ideal in $A$. When $A_\fp$ is regular, $\rgam_{\fp}(A_\fp)$ is a strong generator for  the subcategory of dualisable objects among the $\fp$-local $\fp$-torsion $A$-complexes.
\end{corollary}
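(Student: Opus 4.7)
The strategy is to transport the question from the category $\Thick(\rgam_\fp(A_\fp))$ of local dualisable objects to the category of perfect complexes over the completion of $A_\fp$, where strong generation is a classical consequence of finite global dimension.

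First I would reduce to the local case: replacing $A$ by $A_\fp$, one may assume that $(A,\fm,k)$ is a regular local ring with $\fp=\fm$, so that the goal becomes to show that $\rgam_\fm A$ is a strong generator of $\Thick_A(\rgam_\fm A)$. By Theorem~\ref{th:ca-ring} this thick subcategory is precisely the subcategory of dualisable objects in $\Gamma_\fm\dcat A$. Next I would invoke Lemma~\ref{le:GM-equivalence}, which provides a triangle equivalence
\[
\Thick_{\wh A}(\wh A)\;\longiso\;\Thick_A(\rgam_\fm A)
\]
sending $\wh A$ to $\rgam_\fm A\lotimes_{\wh A}\wh A\cong \rgam_\fm A$. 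Since the notion of a strong generator depends only on the triangulated structure (closure under suspension, finite direct sums, retracts, and a bounded number of cones), this equivalence preserves strong generation. Consequently it suffices to establish that $\wh A$ is a strong generator of $\Thick_{\wh A}(\wh A)=\dcat{\wh A}^{\mathrm c}$, the subcategory of perfect $\wh A$-complexes.

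To finish, I would use the fact that the $\fm$-adic completion of a regular local ring is again a regular local ring; in particular $\wh A$ has finite global dimension. By the result of Rouquier cited in the excerpt (\cite[Proposition~7.2.5]{Rouquier:2008a}), this implies that $\wh A$ is a strong generator of the subcategory of perfect complexes $\dcat{\wh A}^{\mathrm c}$, with generation time bounded by the global dimension of $\wh A$ (equivalently, by the Krull dimension of $A$). Transporting back along the equivalence yields the desired strong generation of $\Thick_A(\rgam_\fm A)$ by $\rgam_\fm A$.

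I do not expect any real obstacle here: the main conceptual content is already contained in Theorem~\ref{th:ca-ring} and Lemma~\ref{le:GM-equivalence}; everything else reduces to the classical fact that regularity passes to the completion and to Rouquier's criterion. If one wants an explicit bound, the argument shows that every dualisable $X\in\Gamma_\fm\dcat A$ can be built from $\rgam_\fm A$ in at most $\dim A +1$ steps, since $\wh A$ has global dimension $\dim A$.
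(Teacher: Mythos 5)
Your proof is correct and follows essentially the same route as the paper's: reduce to the local case, observe that $\wh A$ is again regular of finite global dimension so that $\wh A$ strongly generates $\Thick_{\wh A}(\wh A)$, and transport via the equivalence of Theorem~\ref{th:ca-ring} and Lemma~\ref{le:GM-equivalence}. The only additions beyond the paper's argument are the explicit remark that triangle equivalences preserve strong generation and the bound on generation time, both of which are harmless elaborations.
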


\begin{proof}
  We pass to the localisation at $\fp$ and assume $A$ is a regular
  local ring, and hence of finite global dimension.  Then $\wh A$, the
  completion of $A$ at its maximal ideal also has finite global
  dimension; see \cite[Proposition~2.2.2]{Bruns/Herzog:1998a}. Thus
  $\wh A$ is a strong generator for $\Thick_{\wh A}(\wh A)$. It
  remains to recall that this category is triangle equivalent to the category of dualisable objects
  in $\Gamma_{\fp}\dcat A$, by Theorem~\ref{th:ca-ring} and
  Lemma~\ref{le:GM-equivalence}.
\end{proof}

\section{Other contexts}
\label{se:others}
In this section we discuss  other examples of compactly generated tensor triangulated categories for which we have some information on the local dualisable objects.

\subsection*{Noetherian schemes}
\label{ss:schemes}
Let $\bbX$ be a separated noetherian scheme and $\cat T$  the derived category of quasi-coherent sheaves on $\bbX$, viewed as a tensor triangulated category in the usual way.  For each $x\in \bbX$ one can consider the dualisable objects in the subcategory $\Gamma_{x}\cat T\subseteq \cat T$ consisting of objects supported on $\{x\}$. This category is described by Theorem~\ref{th:ca-ring}, for by standard arguments it is the same as the dualisable objects in $\Gamma_{\fm}\dcat{\mathcal{O}_{\bbX,x}}$, where $\mathcal{O}_{\bbX,x}$ is the  local ring at $x$ and $\fm$ is its maximal ideal. Thus  Corollary~\ref{co:balmer} and Remark~\ref{re:completion} yield the following result.

\begin{corollary}
The Balmer spectrum of $\duals{(\Gamma_{x}\cat T)}$ is homeomorphic to $\Spec \wh{\mathcal{O}}_{\bbX,x}$. \qed
\end{corollary}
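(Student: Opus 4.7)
The plan is to reduce the scheme statement directly to the affine/local case already handled by Corollary~\ref{co:balmer}. The bulk of the work is hidden in the parenthetical ``by standard arguments'' of the preceding paragraph, so the proof itself is essentially one line; the interesting content is setting up the tensor triangulated equivalence that supports the reduction.

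First, I would pick an affine open neighbourhood $U = \Spec A \subseteq \bbX$ of the point $x$, and let $\fp \subseteq A$ be the prime ideal corresponding to $x$. Restriction of quasi-coherent sheaves from $\bbX$ to $U$ is a tensor triangulated functor $\cat T \to \dcat A$, and it is an elementary check that the essential image of $\Gamma_x\cat T$ lies in the subcategory of $\dcat A$ of complexes whose cohomology is supported set-theoretically at $\{\fp\}$. Moreover, since $\bbX$ is separated and noetherian and the support condition is local on $\bbX$, every object of $\Gamma_x\cat T$ is uniquely (up to unique isomorphism) determined by its restriction to $U$, so we obtain a tensor triangle equivalence
\[
\Gamma_x\cat T \;\longiso\; \Gamma_{\fp}\dcat A\,.
\]
After localising at $\fp$ we may replace $A$ by the local ring $\mathcal{O}_{\bbX,x} = A_\fp$ and $\fp$ by its maximal ideal $\fm$, yielding a tensor triangle equivalence $\Gamma_x\cat T \iso \Gamma_\fm \dcat{\mathcal{O}_{\bbX,x}}$.

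Since this equivalence preserves the tensor product and the unit, it restricts to an equivalence on dualisable objects, $\duals{(\Gamma_x\cat T)} \iso \duals{(\Gamma_\fm\dcat{\mathcal{O}_{\bbX,x}})}$, and hence induces a homeomorphism on Balmer spectra. Applying Corollary~\ref{co:balmer} to the local ring $\mathcal{O}_{\bbX,x}$ with its maximal ideal $\fm$ then gives
\[
\bSpec\duals{(\Gamma_x\cat T)} \;\cong\; \bSpec\duals{(\Gamma_\fm\dcat{\mathcal{O}_{\bbX,x}})} \;\cong\; \Spec \wh{\mathcal{O}}_{\bbX,x},
\]
which is the claim.

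The only non-routine step is confirming that the equivalence $\Gamma_x\cat T \iso \Gamma_\fm\dcat{\mathcal{O}_{\bbX,x}}$ respects the tensor structure --- in particular that the restriction-and-localisation functor carries the unit $\rgam_x(\mathcal{O}_{\bbX,x})$ of $\Gamma_x\cat T$ to $\rgam_\fm(\mathcal{O}_{\bbX,x})$ and commutes with $\lotimes$. This is standard because affine opens are flat and local cohomology is compatible with flat base change; however, if one wanted a fully rigorous write-up, this compatibility (and the resulting identification of dualisable objects on both sides) would be the main point to verify carefully before invoking Corollary~\ref{co:balmer}.
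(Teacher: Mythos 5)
Your proposal is correct and follows the same route as the paper, which reduces to $\Gamma_{\fm}\dcat{\mathcal{O}_{\bbX,x}}$ ``by standard arguments'' and then invokes Corollary~\ref{co:balmer}; you have simply spelled out the affine restriction and localisation behind that phrase. No substantive differences.
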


\subsection*{Modular representations of finite groups}
Let $k$ be a field of positive characteristic and $G$ a finite group
whose order is divisible by the characteristic of $k$. We write
$\StMod kG$ for the stable category of $kG$-modules, and $\stmod kG$
for its full subcategory  of finite dimensional modules. Then $\StMod
kG$ is a compactly generated, with compact objects $\stmod kG$, and
tensor product over $k$, with diagonal $G$-action, gives it a
structure of a tensor triangulated category. The  unit is $k$ with
trivial action and the  function object is $\Hom_k(-,-)$, again with
the diagonal $G$-action. Moreover compact objects in $\StMod kG$ are easily seen to be dualisable and hence one has an equality $\comp{(\StMod kG)}=\duals{(\StMod kG)}$.  

The group cohomology ring $H^*(G,k)$ is a finitely generated $k$-algebra. As in the case of the derived category of a commutative noetherian ring, one  considers the subcategory $\Gamma_{\fp}(\StMod kG)$ of the (big) stable module category consisting of $\fp$-local and $\fp$-torsion modules. These are the minimal localising tensor ideals of $\StMod G$, and so the lattice of localising tensor ideals in the stable module category  are parameterised by subsets of $\Proj H^*(G,k)$,  the homogenous prime ideals in $H^*(G,k)$  not containing the maximal ideal $H^{\geqslant 1}(G,k)$. These results are proved in \cite{Benson/Iyengar/Krause:2011b}; see also \cite{Benson/Iyengar/Krause/Pevtsova:2017a}.  In \cite{Benson/Iyengar/Krause/Pevtsova:2023b} we prove the following analogue of Theorem~\ref{th:ca-ring}; the case when $\fp$ is a closed point is also treated  in the work of Carlson~\cite{Carlson:2210.01842v2}.

\begin{theorem}
\label{th:stmod}
Fix $\fp$ in $\Proj H^*(G,k)$.  For each $kG$-module $X$ in $\Gamma_\fp(\StMod kG)$ the following conditions are equivalent:
\begin{enumerate}[\quad\rm (1)]
\item
$X$ is dualisable in  $\Gamma_\fp(\StMod kG)$;
\item
The $H^*(G,k)_\fp$-module $H^*(G,C\otimes_kX)_\fp$ is artinian for each finite dimensional $kG$-module $C$;
\item 
$M$ is in $\Thick(\Gamma_\fp(\StMod kG))$.\qed
\end{enumerate}
\end{theorem}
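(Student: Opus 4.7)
The plan is to mirror the proof of Theorem~\ref{th:ca-ring}, substituting the support theory of $\StMod kG$ over $H^*(G,k)$ from \cite{Benson/Iyengar/Krause:2011b} for the stratification of $\dcat A$ over $A$. Write $\kappa_\fp$ for the tensor unit of $\Gamma_\fp(\StMod kG)$; this is the stable-module analog of $\rgam_\fp(A_\fp)$, so condition (3) should be read as $X\in\Thick(\kappa_\fp)$. Compact objects of $\StMod kG$, that is, finite-dimensional $kG$-modules, play the role of the Koszul complex, and the artinian property over the graded local ring $H^*(G,k)_\fp$ replaces the finite $k(\fp)$-rank condition of Theorem~\ref{th:ca-ring}.

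The implication (3)$\Rightarrow$(1) is immediate: $\kappa_\fp$ is the unit of $\Gamma_\fp(\StMod kG)$, hence dualisable, and dualisable objects form a thick subcategory by Remark~\ref{re:dualisable}.

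For (1)$\Rightarrow$(2), suppose $X$ is dualisable in $\Gamma_\fp(\StMod kG)$ and let $C$ be a finite-dimensional $kG$-module. Since $X$ is already $\fp$-local and $\fp$-torsion, $C\otimes_k X$ lies in $\Gamma_\fp(\StMod kG)$; using the adjunction $\Hom(C\otimes_k X,-)\cong \Hom(C,\swd X\otimes_k -)$ together with the compactness of $C$ in $\StMod kG$, one sees that $C\otimes_k X$ is compact in $\Gamma_\fp(\StMod kG)$. Minimality of $\Gamma_\fp(\StMod kG)$ as a localising tensor ideal then forces $C\otimes_k X\in\Thick(\kappa_\fp)$. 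It remains to observe that $H^*(G,\kappa_\fp)_\fp$ is artinian over $H^*(G,k)_\fp$, being essentially the local cohomology of a graded local ring, and that artinian modules are closed under suspension, summands, and extensions.

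The substantive implication (2)$\Rightarrow$(3) is the stable-module analog of Proposition~\ref{pr:ca-ring}. The strategy is to pass to the derived completion of $X$ with respect to the cohomological ideal at $\fp$, and then run an induction on the finite socle length of the artinian $H^*(G,k)_\fp$-module $H^*(G,X)_\fp$: nonvanishing produces, via local duality, a nonzero map $\Sigma^s\kappa_\fp\to X$ hitting a socle element, and its cone decreases the socle length by one; iterating places $X$ in $\Thick(\kappa_\fp)$. The main obstacle is establishing a Greenlees--May-style duality and the Morita equivalence of Lemma~\ref{le:GM-equivalence} in the graded-commutative and stable-module setting, so as to make the completion step meaningful and guarantee that the completed object still detects membership in $\Thick(\kappa_\fp)$; this is carried out in \cite{Benson/Iyengar/Krause/Pevtsova:2023b}.
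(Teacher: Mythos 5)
The paper does not actually prove Theorem~\ref{th:stmod}: the qed symbol attached to the statement, together with the preceding sentence ``In \cite{Benson/Iyengar/Krause/Pevtsova:2023b} we prove the following analogue of Theorem~\ref{th:ca-ring}'', make clear that the proof is deferred to that forthcoming work. So there is no argument in this manuscript against which to compare your sketch in detail. Your reading of condition (3) as $X\in\Thick(\Gamma_\fp k)$, with $\Gamma_\fp k$ the tensor unit, is the only sensible one, and the overall plan you describe---mirror Theorem~\ref{th:ca-ring} via a graded Greenlees--May/Morita framework over $H^*(G,k)_\fp$, with finite-dimensional modules playing the role of the Koszul complex---is exactly what the surrounding prose indicates.

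That said, there is a genuine error in your sketch of (1)$\Rightarrow$(2). You claim that once $C\otimes_k X$ is known to be compact in $\Gamma_\fp(\StMod kG)$, minimality of this localising tensor ideal forces $C\otimes_k X\in\Thick(\Gamma_\fp k)$. Minimality gives nothing of the sort: it rules out proper nonzero localising tensor ideals, but says nothing about compact objects lying in $\Thick$ of the unit. Were that implication true, it would identify compact objects with dualisable ones in $\Gamma_\fp(\StMod kG)$, which is exactly the failure these theorems quantify---compare the commutative case, where $k(\fp)$ is always compact in $\Gamma_\fp\dcat A$ yet lies in $\Thick(\rgam_\fp(A_\fp))$ only when $A_\fp$ is regular. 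The correct and shorter route is to invoke the characterisation of compact objects of $\Gamma_\fp(\StMod kG)$, the stable-module analogue of the remark in the Introduction that compactness in $\Gamma_\fp\dcat A$ means $\mathrm{length}_{A_\fp}H(X)<\infty$: compactness of $C\otimes_k X$ then yields directly that $H^*(G,C\otimes_k X)_\fp$ is artinian, with no detour through $\Thick(\Gamma_\fp k)$. Finally, note that condition (2) here quantifies over all finite-dimensional $C$, whereas Theorem~\ref{th:ca-ring}(2) tests only against the residue field; the paragraph following the theorem explicitly flags these as conditions of genuinely different strengths, so the proof of (2)$\Rightarrow$(3) must be adapted to this weaker hypothesis rather than carried over verbatim from Proposition~\ref{pr:ca-ring}.
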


Compare condition (2) above with  the corresponding condition in Theorem~\ref{th:ca-ring}. It is not hard to prove that the latter implies that the $A_\fp$ module $H(C\lotimes_AX)$ is artinian for each compact object (that is to say perfect complex)  in $\dcat A$; see \cite{Benson/Iyengar/Krause/Pevtsova:2023b}. But condition Theorem~\ref{th:ca-ring}(2) is strictly stronger, for the residue field is not a compact object in $\Gamma_{\fp}\dcat A$ unless the local ring $A_\fp$ has finite global dimension.  This suggests that there is a broader framework than that covered by Theorem~\ref{th:ca-ring} wherein one can get a handle on dualisable objects.

\subsection*{The stable homotopy category}
The last example we consider is the stable homotopy category of spectra. This is a rather more involved context than the ones discussed earlier, so the discussion is more telegraphic than before; we refer readers to \cite{Hovey/Strickland:1999a} for details.

Akin to the derived category of a commutative ring, the stable homotopy category is determined  by its localisations at various prime numbers. Fix a prime number $p$, a positive integer $n$, and let $\cat S$ be the homotopy category of $p$-local spectra. This is a tensor triangulated category with tensor identity the $p$-local sphere $S$. Let $K(n)$ be the Morava $K$-theory of level $n$ at the prime $p$, and $\cat K$ the category of $K(n)$-local spectra. By \cite[Theorem~7.5]{Hovey/Strickland:1999a}, this is a minimal localising subcategory of  $\cat S$. Let $\hat L \colon \cat S \to \cat K$ be the localisation functor.

\begin{theorem}
\label{th:stmod}
Fix $X$ in $\cat K$,  and consider the following conditions:
\begin{enumerate}[\quad\rm (1)]
\item
$X$ is dualisable in  $\cat K$;
\item
$K(n)_*(X)$ is finite;
\item 
$X$ is in $\Thick(\wh L S)$.
\end{enumerate}
Then \emph{(1)} and \emph{(2)} are equivalent and are implied by \emph{(3)}. \qed 
\end{theorem}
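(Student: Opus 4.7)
The plan is to dispatch the two easy implications by general principles and concentrate on (2)$\Rightarrow$(1), which runs in close parallel to Proposition~\ref{pr:ca-ring}. For (3)$\Rightarrow$(1), the unit $\wh L S$ of the tensor triangulated category $\cat K$ is dualisable, and by Remark~\ref{re:dualisable} the dualisable objects form a thick subcategory of $\cat K$; consequently every object in $\Thick(\wh L S)$ is dualisable. For (1)$\Rightarrow$(2), smashing with $K(n)$ induces a symmetric monoidal exact functor from $\cat K$ to the homotopy category of $K(n)$-module spectra, and such a functor carries dualisable objects to dualisable objects. Because the coefficient ring $K(n)_*$ is a graded field, a $K(n)$-module spectrum is dualisable if and only if its homotopy is a finite-dimensional graded $K(n)_*$-vector space, which gives finiteness of $K(n)_*(X) = \pi_*(K(n) \wedge X)$.

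For (2)$\Rightarrow$(1) the plan is to mimic the inductive structure of Proposition~\ref{pr:ca-ring}. Set $r \colonequals \dim_{K(n)_*} K(n)_*(X)$ and induct on $r$. The base case $r=0$ reduces to the observation that any $X \in \cat K$ with $K(n)_*(X) = 0$ is itself zero: $K(n) \wedge X \simeq 0$ makes $X$ a $K(n)$-acyclic object of the $K(n)$-local category, forcing the identity $X \to X$ to be null. In the inductive step, the goal is to construct a nonzero map $f \colon \Si^s \wh L S \to X$ whose induced map on $K(n)$-homology is nonzero; fitting $f$ into an exact triangle
\[
\Si^s \wh L S \lra X \lra Y \lra \Si^{s+1} \wh L S
\]
produces $Y$ with $\dim_{K(n)_*} K(n)_*(Y) = r - 1$. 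Induction then places $Y$, and hence $X$, in $\Thick(\wh L S)$, which in particular makes $X$ dualisable (and would even verify (3)).

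The main obstacle is the Nakayama-type construction of the map $f$ at the inductive step. Its natural home is the Morava module framework of Hovey--Strickland, where $\wh L S$ can be recovered as a derived $I_n$-adic completion of the $E(n)$-local sphere with respect to the maximal ideal $I_n \subset (E_n)_*$, in exact analogy with the role of $\wh A$ as $\llam^{\fm}A$ in Section~\ref{se:local-dualisable}. Through the ensuing equivalence between $\cat K$ and a suitable category of $L$-complete Morava $E_n$-modules, finiteness of $K(n)_*(X)$ transports to a finiteness condition on $(E_n)^\vee_*(X)$ to which the classical Nakayama lemma for complete local rings applies, yielding a nonzero homogeneous element realised by a map from a shift of $\wh L S$. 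Packaging this into the induction outlined above delivers (2)$\Rightarrow$(1) and completes the proof.
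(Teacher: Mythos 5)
Your treatment of (3)$\Rightarrow$(1), of (1)$\Rightarrow$(2), and of the base case $r=0$ are all fine. The difficulty is with the inductive step of (2)$\Rightarrow$(1): if that step went through, the induction would place $X$ in $\Thick(\wh L S)$ and you would have proved (2)$\Rightarrow$(3). But the paper points out, immediately after the theorem, that (3) is \emph{strictly} stronger: Hopkins' example for $n=1$ and $p$ odd gives a $K(n)$-local spectrum $Y$ that is dualisable (hence has finite $K(n)_*$) yet is \emph{not} in $\Thick(\wh L S)$. Feeding that $Y$ into your induction yields a contradiction, so the inductive step must fail for it.

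The precise gap is the ``Nakayama'' step. In Proposition~\ref{pr:ca-ring}(1) the existence of a nonzero map $\Si^s A\to X$ detecting a class in $H(k\lotimes_A X)$ was extracted from the surjection $H^i(X)\twoheadrightarrow H^i(X)/\fm H^i(X)$ in the \emph{lowest} nonvanishing degree, i.e.\ from right-exactness of $-\otimes_A k$ together with boundedness of $H(X)$. The $K(n)$-local category has no analogue of this: there is no t-structure/connectivity giving a ``top degree,'' and the Hurewicz-type map $\pi_*(X)=[\wh L S,X]_*\to K(n)_*(X)$ can very well be zero even when $K(n)_*(X)\neq 0$ (the relevant classes may live in positive Adams filtration in the $E_n$-based Adams spectral sequence). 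Your proposed fix via Morava modules does not close this gap: the Nakayama lemma for $(E_n)^\vee_*(X)=\pi_*\wh L(E_n\wedge X)$ produces a nonzero class in the Morava module, but this is an element of $\pi_*$ of $\wh L(E_n\wedge X)$, not of $\pi_*(X)=[\wh L S,X]_*$, and there is no reason for it to lift along the unit map. Hopkins' spectrum is exactly an instance where no map from a shift of $\wh L S$ detects anything in $K(n)$-homology, so the cofibre construction cannot be started.

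For the record, the paper offers no independent proof here: the theorem is stated with a $\qed$ and is a citation to Hovey--Strickland~\cite{Hovey/Strickland:1999a}, where the equivalence (1)$\Leftrightarrow$(2) is established directly by showing that finiteness of $K(n)_*X$ is equivalent to $(E_n)^\vee_*X$ being finitely generated and pro-free as an $L$-complete $(E_n)_*$-module, and that this in turn characterises dualisability --- crucially \emph{without} attempting to resolve $X$ by copies of $\wh L S$. The contrast with Theorem~\ref{th:ca-ring}, where conditions (1), (2), (3) are all equivalent, is precisely the interesting phenomenon here, and any correct proof has to respect that asymmetry.
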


Condition (3) is strictly stronger than (1) and (2): Hopkins
constructed a $K(n)$-local spectrum $Y$ in the case $n=1$ that is
dualisable but not finitely built from $\wh L S$.  Set $E\colonequals
\wh{E(1)}$ and $T\colonequals \psi^a-1\in E^0(E)$, where $\psi^a$ is
the Adams psi-operation with $a$ a topological generator for
$1+p\bbZ_p$. Provided $p$ is odd, $Y$ is the cofibre of the map
$T^2-p\colon E \to E$.  The spectrum $Y$ is dualisable but is not in
the thick subcategory generated by the Picard group of invertible
objects in $\cat K$, and hence not in $\Thick(\wh L S)$.
For details, see  \cite[Section~15.1]{Hovey/Strickland:1999a}.

\bibliographystyle{amsplain}

\newcommand{\noopsort}[1]{}
\providecommand{\bysame}{\leavevmode\hbox to3em{\hrulefill}\thinspace}
\providecommand{\MR}{\relax\ifhmode\unskip\space\fi MR }
\providecommand{\MRhref}[2]{%
  \href{http://www.ams.org/mathscinet-getitem?mr=#1}{#2}
}
\providecommand{\href}[2]{#2}

\end{document}